\date{}
\newtheorem{theorem}{Theorem}[section]
\newtheorem{lemma}[theorem]{Lemma}
\newtheorem{proposition}[theorem]{Proposition}
\newtheorem{corollary}[theorem]{Corollary}
\newcommand{\ao}{\vec{A}}
\newcommand{\bo}{\vec{B}}
\newcommand{\co}{\vec{C}}
\newcommand{\xo}{\vec{X}}
\newcommand{\yo}{\vec{Y}}
\newcommand{\zo}{\vec{Z}}
\begin{document}
	
\setcounter{page}{1}
	
\title{\bf Interpolation of compact bilinear operators}
\author{Mieczys{\l}aw Masty{\l}o and Eduardo B.~Silva}

\date{}
\maketitle

\noindent
\renewcommand{\thefootnote}{\fnsymbol{footnote}}
\footnotetext{2010 \emph{Mathematics Subject Classification}: Primary 46B70, 47B07.}
\footnotetext{\emph{Key words and phrases}: Compact bilinear operators, interpolation spaces, interpolation functors.}
\footnotetext{The first named author was supported by the National Science
Centre of Poland, project 2015/17/B/ST1/00064.}

\noindent

\begin{abstract}
\noindent
We investigate the stability of compactness of bilinear operators acting on the product of interpolation of Banach spaces.
~We develop a general framework for such results and our method applies to abstract methods of interpolation in the sense
of Aronszajn and Gagliardo.~A~key step is to show an~one-sided bilinear interpolation theorem on compactness for bilinear
operators on couples satisfying an approximation property. We show applications to general cases, including Peetre's
method and the general real interpolation methods.
\end{abstract}

\maketitle

\vspace{7 mm}

\section{Introduction}

In recent years various properties of bilinear and as multilinear operators are being studied intensively. Interest
in this study has increased since these operators are connected to important applications. We mention applications
in harmonic analysis in the study of $p$-Sidon sets (see \cite{Bayart}). Bilinear operators appear in applications
in elasticity. We point out the Newton--Kantorovi\v c effective method for solving certain equations involving
bilinear operators on Banach spaces (see \cite{AM}).~These operators also play an important role in scattering
theory (see \cite{Kupsch}). The bilinear interpolation theorems are powerful tools in the theory of Banach operator
ideals.

An important question related to the behavior of interpolation of compact operators is whether an operator acting
between Banach couples and  compactly on one or both of the ‘endpoint’ spaces, also acts compactly on the interpolation
spaces generated by the couples. It is a~natural question if there are variants of known results in the setting of
bilinear operators. In current paper, we discuss interpolation of bilinear compact operators. The problem
of interpolation of bilinear  operators by the classical real method was first studied by Lions and Peetre in their
seminal paper \cite{LP}. Calder\'on studied the same problem in his fundamental paper \cite{Cal} for the lower complex
method. In addition  the interpolation of compact bilinear operators is also considered in \cite[10.4]{Cal}. The counterpart
has been  studied recently in \cite{FS} for the real method $(\,\cdot\,)_{\rho, q}$ with quasi-power function parameter
$\rho$ and $1\leq q\leq \infty$, which is a~generalization of the classical real method generated by $\rho(t) = t^{\theta}$
for all $t>0$ with $\theta \in (0, 1)$. Results from \cite{FS} were extended in \cite{FM} for larger class of real methods
of interpolation.

The problem of interpolation of bilinear operators by abstract interpolation methods was studied in \cite{Mas1, Mas2}.
The stability of compactness of bilinear operators acting on the product of the real interpolation spaces  has been studied recently
as well as in \cite{Cobos1, Luz, MS}. We also mention that in a~very recent paper \cite{BC} the authors  established an interesting
formula for the  measure of non-compactness of bilinear operators interpolated by the general real method. In particular this
result applies to the real method and to the real method with a function parameter.

The investigation on compactness property of bilinear operators acting on the product of abstract interpolation Banach spaces is
not currently much advanced. In a recent paper \cite{MS}, interpolation of the measure of non-compactness of bilinear operators
is studied. In this paper the results of a~general nature are proved which states that, for a~large class of interpolation functors
preserving bilinear interpolation estimates of measures of non-compactness of interpolated linear operators between Banach couples,
can be lifted to bilinear operators. It has been shown that, as an application, the measure of non-compactness of bilinear operators
behave well under the real method of interpolation. Applications of these results comprise theorems on stability of compactness of
interpolated operators.

We point out that these results are proved for the class of bilinear operators $T$ defined on the products of intersections
$(X_0 \cap X_1) \times (Y_0 \cap Y_1)$ of Banach couples $(X_0, X_1)$ and $(Y_0,Y_1)$ with values for the intersection
$Z_0 \cap Z_1$ of a Banach couple $(Z_0, Z_1)$, such that, for both $j=0$ and $j=1$, we have
\[
\|T(x, y)\|_{Z_j} \leq M_j\|x\|_{X_j} \|y\|_{Y_j}, \quad\, (x, y) \in (X_0 \cap X_1) \times (Y_0 \cap Y_1).
\]
The study of abstract interpolation properties of this class of bilinear operators requires some natural restrictions
whenever we expect to prove an abstract general result.~It should be pointed out that many important bilinear operators
in harmonic analysis belong to the above type defined for a~special class of Banach function spaces.~We refer to \cite{BT}
and \cite{BC}, where compactness of commutators of bilinear Calder\'on--Zygmund operators and multiplication by functions
in $C\!M\!O$ of $B\!M\!O$ from the product $L_p \times L_q$ into $L_r$ is studied under the conditions $1<p, q <\infty$
and $1/p + 1/q = 1/r \leq 1$.

In this paper, we provide a~very general abstract approach in the study of the stability of compactness property of (bounded)
bilinear operators acting on products of abstract interpolation of Banach spaces. We consider bilinear operators
$T\colon (X_0 + X_1)\times (Y_0 + Y_1) \to Z_0 + Z_1$, such that the restriction $T\colon X_j \times Y_j \to Z_j$
is bounded for $j=0$ and $j=1$. We prove an~one-sided bilinear interpolation theorem on compactness for bilinear
operators of this type, acting on couples satisfying an approximation property $(Ap)$, introduced in a~remarkable paper by Cobos
and Peetre \cite{CP}. Result is lifted to the~wider class of abstract methods of interpolation in the sense of Aronszajn and
Gagliardo, allowing us to obtain a~very general compactness result for interpolation of bilinear operators.~As applications,
we consider the real, complex and Peetre interpolation methods.

\section{Definitions and preliminary results}

We use notations from Banach space theory. The (closed) unit ball of a~Banach space $X$ is denoted by $B_X$. As usual, we denote
by $\mathcal{L}(X, Y)$, the Banach space of all bounded operators $T\colon X\to Y$ from Banach space $X$ into $Y$, equipped
with uniform norm.

The product $X\times Y$ of two Banach spaces is equipped with the norm $\|(x, y)\|_{X\times Y} := \max\{\|x\|_X, \|y\|_Y\}$
for all $(x, y) \in X\times Y$. $\mathcal{L}_2(X\times Y, Z)$ denotes the Banach space of all $2$-linear bounded mappings
$T\colon X\times Y \to Z$, equipped with the norm
\[
\|T\|_{X\times Y\to Z} := \sup\big\{\|T(x, y)\|_{Z}; \, (x, y) \in B_{X\times Y}\big\}.
\]
Mapping $T \in \mathcal{L}_2(X\times Y, Z)$ is called a~\emph{bilinear operator}.

A~$2$-linear mapping $X\times Y\to Z$ is said to be compact if $T$ maps bounded subsets of $X\times Y$ into precompact subsets
of $Z$. This condition is equivalent to precompactness of $T(B_{X\times Y})$ in $Z$. We will use an equivalent condition,
namely for any bounded sequence $\{(x_n, y_n)\}$ in $X\times Y$, the sequence $\{T(x_n, y_n)\}$ has a~convergent subsequence
in $Z$. We refer to \cite{BT} for examples of bilinear compact operators.

If $S_0\colon X_0 \to Y_0$ and $S_1\colon X_1 \to Y_1$ are operators between Banach spaces, then we denote by $(S_0, S_1)$
the bounded linear operator from $X_0 \times X_1$ to $Y_0 \times Y_1$ defined by
\[
(S_0, S_1)(x_0, x_1): = (S_{0}x_0, S_{1}x_1),  \quad\, (x_0, x_1) \in X_{0} \times X_{1}.
\]

The following obvious proposition is required.

\begin{proposition}
\label{lift}
Let $S_0\colon X_0 \to Y_0$ and $S_1\colon X_1 \to Y_1$ be surjective operators between Banach spaces. Suppose that $W$ and $Z$
are Banach spaces and let $T \colon Y_0 \times Y_1 \to Z$ be a~bilinear operator. If $V\colon Z \to W$ is an isomorphic embedding,
then $T\colon Y_0 \times Y_1 \to Z$ is compact if, and only if, the bilinear operator $VT(S_0, S_1) \colon X_0 \times X_1 \to W$
is compact.
\end{proposition}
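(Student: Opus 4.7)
The argument is a straightforward two-way check that exploits (i) the open mapping theorem to lift bounded sequences in $Y_0\times Y_1$ to bounded sequences in $X_0\times X_1$, and (ii) the fact that an isomorphic embedding has a bounded inverse on its image.

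For the forward implication, assume $T$ is compact and take a bounded sequence $\{(x_n^0, x_n^1)\}$ in $X_0\times X_1$. Since $S_0$ and $S_1$ are bounded, the sequence $\{(S_0 x_n^0, S_1 x_n^1)\}$ is bounded in $Y_0\times Y_1$, so $\{T(S_0 x_n^0, S_1 x_n^1)\}$ has a convergent subsequence in $Z$; applying the bounded operator $V$ preserves convergence, which shows compactness of $VT(S_0, S_1)$.

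For the reverse implication, assume $VT(S_0, S_1)$ is compact and take a bounded sequence $\{(y_n^0, y_n^1)\}$ in $Y_0\times Y_1$. By the open mapping theorem applied to the surjective bounded operators $S_0$ and $S_1$ between Banach spaces, there is a constant $C>0$ and lifts $x_n^0\in X_0$, $x_n^1\in X_1$ with $S_0 x_n^0 = y_n^0$, $S_1 x_n^1 = y_n^1$, and $\|x_n^j\|_{X_j} \leq C\|y_n^j\|_{Y_j}$ for $j=0,1$. Thus $\{(x_n^0, x_n^1)\}$ is bounded in $X_0\times X_1$, and by compactness of $VT(S_0, S_1)$ there is a subsequence along which $VT(y_n^0, y_n^1) = VT(S_0 x_n^0, S_1 x_n^1)$ converges in $W$. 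Since $V$ is an isomorphic embedding, $V^{-1}$ is bounded on $V(Z)$ (which is closed in $W$), so $T(y_n^0, y_n^1)$ converges in $Z$ along the same subsequence, establishing compactness of $T$.

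No step is genuinely difficult here; the only point requiring care is the invocation of the open mapping theorem to produce bounded lifts, which relies on $X_0, Y_0, X_1, Y_1$ all being Banach spaces and on $S_0, S_1$ being surjective, both hypotheses being explicitly granted.
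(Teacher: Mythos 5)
Your proof is correct; the paper states this proposition without proof (calling it obvious), and your argument — pushing forward bounded sequences for one direction, and using the open mapping theorem to produce bounded lifts plus the closedness of $V(Z)$ and boundedness of $V^{-1}$ on it for the other — is exactly the standard argument the authors had in mind. No gaps.
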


We will use standard notation from the interpolation theory. As a rule, we follow \cite{BL}. If $X$ is an intermediate Banach
space with respect to a~couple $\xo=(X_0, X_1)$, we let $X^{\circ}$ be the closed hull of $X_0 \cap X_1$ in $X$, and
the~Banach couple $(X_0^{\circ}, X_1^{\circ})$ is denoted by $\xo^{\circ}$. A~Banach couple $(X_0, X_1)$ is called \emph{regular}
if $X_j^{\circ}= X_j$ for $j\in \{0, 1\}$.

We shall recall that a~mapping $\mathcal{F}$ from the category of all couples of Banach spaces into the category of all
Banach spaces is said to be an interpolation \emph{functor} (or \emph{method}) if, for any couple $\xo:=(X_0, X_1)$,
the Banach space $\mathcal{F}(X_0, X_1)$ is  intermediate with respect to $\xo$ (i.e., $X_0 \cap X_1 \hookrightarrow \mathcal{F}(\xo)
\hookrightarrow X_0 + X_1$), and $T\colon \mathcal{F}(X_0, X_1) \to \mathcal{F}(Y_0, Y_1)$ for all  $T\colon (X_0, X_1) \to (Y_0, Y_1)$
As usual, the notation $T\colon (X_0, X_1) \to (Y_0, Y_1)$ means that $T\colon X_0 + X_1 \to Y_0 + Y_1$ is a~linear operator such that
the restrictions of $T$ to space $X_j$ is a~bounded operator from $X_j$ to $Y_j$, for both $j=0$ and $j=1$. The
interpolation functor is said to be exact if $\|T\|_{\mathcal{F}(X_0,X_1)\to \mathcal{F}(Y_0,Y_1)} \leq \max_{j=0, 1}
\|T|_{X_j}\|_{X_j\to Y_j}.$

The set of all functions $\varphi \colon (0, \infty) \times (0, \infty) \to (0, \infty)$, which are non-decreasing in each variable
and positively homogeneous (that is, $\varphi(\lambda s,  \lambda t) = \lambda \varphi(s, t)$ for all $\lambda, s, t>0$), is denoted
by $\Phi$. The subset of all $\varphi \in \Phi$, such that $\lim_{s\to 0}\varphi(1, s)= \lim_{s\to 0} \varphi(s, 1)=0$
is denoted by $\Phi_0$.

Note that for any $\varphi \in \Phi$, $\varphi \neq 0$, the function $(s, t)\mapsto 1/\varphi(1/s, 1/t)$ defined for all $s, t>0$
also belongs to $\Phi$. This function will be denoted by $\varphi^{*}$. Observe that functions from $\Phi$ are continuous by
monotonicity. Note that every $\varphi \in \Phi$ can be extended by continuity to $[0, \infty) \times [0,\infty)$. This extension
will be denoted by the same symbol $\varphi$. The simplest examples of interpolation functions are $as + bt$, $\max\{as, bt\}$,
and $\min\{as, bt\}$, where $a, b>0$ and the power functions $s^{1-\theta}t^{\theta}$, where $0\leq \theta \leq 1$.

Let $\xo=(X_0, X_1)$ be a Banach couple. For every $s,t>0$, we define the $K$-functional
\[
K(s,t, x; \xo) = \inf\{s\|x_0\|_{X_0} + t \|x_1\|_{X_1}; \, x= x_0 + x_1\}, \quad\, x\in X_0 + X_1.
\]
In the sequel, for $x\in X_0 + X_1$,
\[
K(t, x; \xo):= K(1, t, x; \xo), \quad\, t>0.
\]

For any Banach space $X$, such that $X_0 \cap X_1 \hookrightarrow X$ (resp., $X \hookrightarrow X_0 + X_1$),
we define (the fundamental function of $X$ with respect to $\xo$) $\phi_X \in \Phi$ (resp., $\psi_X \in \Phi$) by
\[
\phi_X(s, t) = \sup\{\|x\|_{X};\, x\in X_0 \cap X_{1},\,\,\|x\|_{X_0} \leq s, \|x\|_{X_1} \leq t\}
\]
(resp.,
\[
\psi_X(s,t; \xo):= \sup\{K(s, t, x; \xo); \,  \|x\|_X =1\}, \quad\, s, t>0).
\]
Let $\xo=(X_0, X_1)$, $\yo=(Y_0, Y_1)$ and $\zo=(Z_0, Z_1)$ be Banach couples. If an operator
$T\colon (X_0 + X_1)\times (Y_0 + Y_1) \to Z_0 + Z_1$ is such that the restrictions
$T\colon X_0 \times Y_0 \to Z_0$ and $T\colon X_1 \times Y_1 \to Z_1$ are bilinear operators, then we write
$T\colon \xo \times \yo \to \zo$.

Let's assume that $X$, $Y$ and $Z$ are Banach spaces intermediate to Banach couples $\xo$, $\yo$ and $\zo$,
respectively. If for every bilinear operator $T\colon \xo \times \yo \to \zo$, the restriction of $T$ is bounded
from $X\times Y$ to $Z$, then $X$, $Y$ and $Z$ are called \emph{bilinear interpolation spaces} with respect to
$(\xo, \yo)$ and $\zo$ ($(X, Y; Z) \in \mathcal{B}(\xo, \yo; \zo)$ for short). If in addition there exists
a~function $\varphi \in \Phi$, such that
\[
\|T\|_{X\times Y\to Z} \leq \varphi\big(\|T\|_{X_0 \times Y_0 \to Z_0}, \|T\|_{X_1 \times Y_1 \to Z_1}\big),
\]
then $X$, $Y$ and $Z$ are called $\varphi$-bilinear interpolation spaces, and we write
$(X, Y; Z) \in \mathcal{B}_{\varphi}(\xo, \yo; \zo)$ for short.

The following observation is required.

\begin{proposition}
\label{lambda}
Let $A, B$ and $C$ be Banach spaces and let $\{T_n\}_{n=1}^{\infty}$ be a~sequence of bilinear operators from
$A\times B$ to $C$, such that $\|T_n\|_{A \times B \to C} \to \lambda$ as $n\to \infty$. Then,
there exists a~sequence $\{(a_n, b_n)\}_{n=1}^{\infty}$ in the unit ball of $A\times B$, such that
\[
\lim_{n\to \infty} \|T_n(a_n, b_n)\|_C  = \lambda.
\]
\end{proposition}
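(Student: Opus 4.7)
The plan is to exploit the definition of the operator norm as a supremum: for each individual $T_n$, the value $\|T_n\|_{A\times B\to C}$ can be approached arbitrarily closely by evaluating $T_n$ at points in the unit ball $B_{A\times B}$, so we only need to choose a sufficiently good approximate maximizer for each $n$ and control the error uniformly in $n$.

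Concretely, I would proceed as follows. By the definition
\[
\|T_n\|_{A\times B\to C} = \sup\bigl\{\|T_n(a, b)\|_C;\, (a, b)\in B_{A\times B}\bigr\},
\]
for each positive integer $n$ we can select $(a_n, b_n) \in B_{A\times B}$ satisfying
\[
\|T_n\|_{A\times B\to C} - \tfrac{1}{n} \leq \|T_n(a_n, b_n)\|_C \leq \|T_n\|_{A\times B\to C}.
\]
The right-hand inequality is automatic since $(a_n, b_n)$ lies in the unit ball. Given the hypothesis $\|T_n\|_{A\times B\to C} \to \lambda$, the sandwich above forces $\|T_n(a_n, b_n)\|_C \to \lambda$ as $n\to\infty$, which is precisely the claim.

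There is no real obstacle here: the statement is an immediate consequence of the definition of the supremum, together with the fact that the error $1/n$ (or any fixed sequence converging to zero) is negligible compared with the convergence $\|T_n\| \to \lambda$. The only point worth noting is that the two inequalities above must be bundled together so that one obtains both the upper and lower bound needed for the sandwich; the lower bound comes from approximating the sup, the upper bound from the defining property of the norm. Hence the sequence $\{(a_n, b_n)\}_{n=1}^{\infty}$ so constructed meets the requirements.
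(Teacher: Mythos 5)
Your proof is correct and is exactly the standard approximate-maximizer argument the authors had in mind; the paper itself omits the proof as an obvious observation. The choice of error $1/n$ (or any null sequence) and the sandwich with the trivial upper bound $\|T_n(a_n,b_n)\|_C \leq \|T_n\|_{A\times B\to C}$ is all that is needed.
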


We also quote the following technical result. Since the proof is obvious, it will be omitted.
\begin{proposition}
Let $(A_0, A_1)$ and $(B_0, B_1)$ be Banach couples and let $C$ be a~Banach space. Assume that
$T\colon (A_0 + A_1)\times (B_0 + B_1) \to C$ is $2$-linear mapping, such that $T$ is a~bilinear
operator from $A_j \times B_j$ to $C$ for $j\in \{0, 1\}$. Then $T\colon (A_0 + A_1) \times (B_0 \cap B_1) \to C$
and $T\colon (A_0 \cap A_1) \times (B_0 + B_1) \to C$ are bounded bilinear operators.
\end{proposition}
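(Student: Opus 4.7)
The plan is to exploit bilinearity and the two given boundedness hypotheses by splitting the first argument according to the sum decomposition, while using that the second argument lives in the intersection. I treat the first assertion; the second is completely symmetric.

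Let $M_j := \|T\|_{A_j \times B_j \to C}$ for $j \in \{0,1\}$ and take $(a,b) \in (A_0 + A_1) \times (B_0 \cap B_1)$. For any decomposition $a = a_0 + a_1$ with $a_j \in A_j$, bilinearity of $T$ in the first variable gives
\[
T(a,b) = T(a_0, b) + T(a_1, b).
\]
Since $b \in B_0 \cap B_1$, it belongs to both $B_0$ and $B_1$, and a fortiori $\|b\|_{B_j} \le \|b\|_{B_0 \cap B_1}$ for $j = 0,1$. The hypothesis $T\colon A_j \times B_j \to C$ bounded therefore yields
\[
\|T(a_j, b)\|_C \le M_j \|a_j\|_{A_j}\|b\|_{B_j} \le \max(M_0, M_1)\, \|a_j\|_{A_j}\,\|b\|_{B_0 \cap B_1}.
\]
Summing and taking the infimum over all admissible decompositions $a = a_0 + a_1$ I obtain
\[
\|T(a,b)\|_C \le \max(M_0, M_1)\, \|a\|_{A_0 + A_1}\, \|b\|_{B_0 \cap B_1},
\]
which proves boundedness of $T\colon (A_0 + A_1) \times (B_0 \cap B_1) \to C$; bilinearity on this restricted domain is inherited from the ambient bilinearity of $T$.

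There is no real obstacle in this argument: the only point worth flagging is that the bound on $\|b\|_{B_j}$ is independent of the decomposition of $a$, which is precisely what allows the passage to the infimum and hence the appearance of $\|a\|_{A_0 + A_1}$. For the second statement, the roles of the variables are interchanged: one fixes $a \in A_0 \cap A_1$, decomposes $b = b_0 + b_1 \in B_0 + B_1$, applies bilinearity in the second variable, and uses $\|a\|_{A_j} \le \|a\|_{A_0 \cap A_1}$ in place of the analogous bound for $b$.
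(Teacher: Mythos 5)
Your argument is correct and is exactly the ``obvious'' proof the authors had in mind (the paper explicitly omits it): split $a=a_0+a_1$, use bilinearity and the two endpoint bounds together with $\|b\|_{B_j}\le\|b\|_{B_0\cap B_1}$, and pass to the infimum over decompositions. Nothing further is needed, and the symmetric case is handled as you indicate.
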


We will now provide variants of Lions--Peetre compactness results in the setting of bilinear operators.

\begin{lemma}
\label{LP1}
Let $A$ and $B$ be Banach spaces, $(C_0, C_1)$ be a~Banach couple and $C$ be a~Banach space, such that
$C_0 \cap C_1 \hookrightarrow C$. Assume that a~bilinear operator $T \colon A \times B \to C_0 \cap C_1$ is
such that $T\colon A\times B \to C_0$ is compact. Then $T\colon A\times B \to C$ is also compact whenever
$\phi_C(s, 1) \to 0$ as $s\to 0$.
\end{lemma}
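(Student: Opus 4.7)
The plan is to reduce the assertion to the statement that any bounded sequence $\{T(a_n,b_n)\}$ in $C_0\cap C_1$ that is Cauchy in $C_0$ is also Cauchy in $C$, and then use completeness of $C$.

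\smallskip

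\noindent\textbf{Step 1 (extracting a $C_0$-Cauchy subsequence).} Fix a bounded sequence $\{(a_n, b_n)\}\subset A\times B$, which we may assume lies in $B_{A\times B}$. Since $T\colon A\times B\to C_0$ is compact, we may pass to a subsequence (still denoted $\{(a_n,b_n)\}$) so that $\{T(a_n,b_n)\}$ converges in $C_0$; in particular it is Cauchy there, so $\|T(a_n,b_n)-T(a_m,b_m)\|_{C_0}\to 0$ as $m,n\to\infty$.

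\smallskip

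\noindent\textbf{Step 2 (boundedness in $C_1$).} Because $T$ is bounded from $A\times B$ into $C_0\cap C_1$, there exists $M>0$ such that $\|T(a,b)\|_{C_1}\leq M$ whenever $(a,b)\in B_{A\times B}$. Consequently, if $M=0$ then $T\equiv 0$ and the conclusion is trivial; otherwise
\[
\|T(a_n,b_n)-T(a_m,b_m)\|_{C_1}\leq 2M\quad\text{for all }m,n.
\]

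\smallskip

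\noindent\textbf{Step 3 (applying the fundamental function).} For $x\in C_0\cap C_1$, the very definition of $\phi_C$ combined with its monotonicity yields $\|x\|_C\leq \phi_C(\|x\|_{C_0},\|x\|_{C_1})$. Writing $u_{m,n}:=T(a_n,b_n)-T(a_m,b_m)$, positive homogeneity of $\phi_C\in\Phi$ and the estimate from Step 2 give
\[
\|u_{m,n}\|_C \leq \phi_C\bigl(\|u_{m,n}\|_{C_0},2M\bigr) = 2M\,\phi_C\!\left(\frac{\|u_{m,n}\|_{C_0}}{2M},\,1\right).
\]
Since $\|u_{m,n}\|_{C_0}\to 0$ and $\phi_C(s,1)\to 0$ as $s\to 0^+$ by hypothesis, the right-hand side tends to $0$. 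Thus $\{T(a_n,b_n)\}$ is Cauchy in $C$, and by completeness of $C$ it converges there. This establishes compactness of $T\colon A\times B\to C$.

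\smallskip

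There is no real obstacle here; the only subtle point is checking that the smallness assumption $\phi_C(s,1)\to 0$ can be exploited on \emph{both} coordinates of $\phi_C$ simultaneously, which is handled by rescaling via positive homogeneity after controlling the $C_1$-norm by a fixed constant $2M$. No use of bilinearity is made beyond the a priori boundedness of $T$ on $B_{A\times B}$, so the argument is essentially the bilinear transcription of the classical Lions--Peetre compactness lemma.
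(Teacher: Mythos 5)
Your proposal is correct and follows essentially the same route as the paper: extract a subsequence that is Cauchy in $C_0$ via compactness, bound the differences in $C_1$ by a fixed constant, and convert $C_0$-smallness into $C$-smallness through the fundamental function $\phi_C$ using monotonicity/homogeneity together with the hypothesis $\phi_C(s,1)\to 0$. The only cosmetic difference is that the paper normalizes $\|T\|_{A\times B\to C_j}\leq 1$ at the outset, whereas you keep the constant $M$ and rescale by positive homogeneity, which amounts to the same estimate.
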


\begin{proof}
Without loss of generality we may assume that $\max_{j=0, 1}\|T\|_{A\times B\to C_j} \leq 1$. Let $\{(a_n, b_n)\}$ be
a~bounded sequence in the unit ball of $A \times B$. Since $T$ is compact from $A \times B$ into $C_0$, by passing
to subsequence, we may assume that $\{T(a_n, b_n)\}$ is a~Cauchy sequence in $C_0$. Since
$\|T(a_n, b_n)- T(a_k, b_k)\|_{C_1} \leq 2$, it follows from monotonicity of function $\phi_X$ that for
each positive integer $n$ and $k$,
\begin{align*}
\|T(a_n, b_n)- T(a_k,b_k)\|_C & \leq \phi_C \big(\|T(a_n, b_n)- T(a_k,b_k)\|_{C_0}, \|T(a_n, b_n)- T(a_k, b_k)\|_{C_1}\big) \\
& \leq 2\phi_C \big(\|T(a_n, b_n)- T(a_k,b_k)\|_{C_0}, 1\big).
\end{align*}
Combining with our hypothesis that $\phi_C(s, 1) \to 0$ as $s\to 0+$ yields that $\{T(a_n, b_n)\}$ is a~Cauchy sequence in $C$.
The proof is complete.
\end{proof}

The next variant of the Lions--Peetre compactness result for bilinear operators is given in the following lemma.

\begin{lemma}
\label{LP2}
Let $C$ be any Banach space and $\ao=(A_0, A_1)$, $\bo=(B_0, B_1)$ be Banach couples and let $T\colon (A_0 + A_1) \times (B_0 + B_1) \to C$
be a bilinear operator, such that $T\in \mathcal{L}_2(A_j \times B_j, C)$ for $j\in \{0, 1\}$. Assume that $A\hookrightarrow A_0 + A_1$
and $B\hookrightarrow B_0 + B_1$ are Banach spaces, such that $\psi_A(t, 1) \to 0$, $\psi_B(t, 1) \to 0$ as $t \to 0$. Then, for any
compact bilinear operator $T\colon A_0\times B_0 \to C$, the restriction $T\colon A\times B \to C$ is also a~compact operator.
\end{lemma}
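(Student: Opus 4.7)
\emph{Plan.} I would adapt the classical Lions--Peetre decomposition argument to the bilinear setting. Let $\{(a_n, b_n)\}$ be a bounded sequence in $A\times B$ with $\|a_n\|_A, \|b_n\|_B \le 1$. Using the $\psi$-hypothesis, for each integer $k\ge 1$ fix $t_k>0$ with $\psi_A(t_k, 1)<1/k$ and $\psi_B(t_k, 1)<1/k$, and for every $n$ select decompositions $a_n = a_n^{0,k}+a_n^{1,k}$, $b_n = b_n^{0,k}+b_n^{1,k}$ with $a_n^{j,k}\in A_j$ and $b_n^{j,k}\in B_j$ satisfying $t_k\|a_n^{0,k}\|_{A_0}+\|a_n^{1,k}\|_{A_1}\le 2/k$ and $t_k\|b_n^{0,k}\|_{B_0}+\|b_n^{1,k}\|_{B_1}\le 2/k$. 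Expanding by bilinearity gives
\[
T(a_n, b_n) = T(a_n^{0,k}, b_n^{0,k}) + E_n^k,
\]
where $E_n^k := T(a_n^{0,k}, b_n^{1,k}) + T(a_n^{1,k}, b_n^{0,k}) + T(a_n^{1,k}, b_n^{1,k})$.

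The crux is to show that $\|E_n^k\|_C\to 0$ as $k\to\infty$, uniformly in $n$. The summand $T(a_n^{1,k}, b_n^{1,k})$ is directly $O(1/k^2)$ by $T\in \mathcal{L}_2(A_1\times B_1, C)$. The two cross terms are the main obstacle, since the pairs $(A_0, B_1)$ and $(A_1, B_0)$ are not among the prescribed boundedness data; I resolve this by using that $T$ is, by the paper's convention, a bounded bilinear operator on $(A_0+A_1)\times(B_0+B_1)$. The identity $a_n^{0,k} = a_n - a_n^{1,k}$ together with $A\hookrightarrow A_0+A_1$ yields $\|a_n^{0,k}\|_{A_0+A_1}\le \|a_n\|_{A_0+A_1}+\|a_n^{1,k}\|_{A_1}\le K_A + 2/k$, and symmetrically $\|b_n^{0,k}\|_{B_0+B_1}\le K_B + 2/k$, where $K_A, K_B$ are the respective embedding constants. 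Combined with $\|a_n^{1,k}\|_{A_1},\|b_n^{1,k}\|_{B_1}\le 2/k$, each cross term is $O(1/k)$, uniformly in $n$.

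For the main term, for each fixed $k$ the sequence $\{(a_n^{0,k}, b_n^{0,k})\}_n$ is bounded in $A_0\times B_0$ (the bound $2/(kt_k)$ depends on $k$, which is harmless since $k$ is fixed). Compactness of $T\colon A_0\times B_0\to C$ then yields a subsequence along which $T(a_n^{0,k}, b_n^{0,k})$ converges in $C$. A standard diagonal extraction produces a single subsequence $\{(a_{n_j}, b_{n_j})\}$ for which $T(a_{n_j}^{0,k}, b_{n_j}^{0,k})$ converges for every $k$. A routine $\varepsilon/3$-argument, combining convergence of the main term for sufficiently large $k$ with the uniform smallness of $E_n^k$, shows that $\{T(a_{n_j}, b_{n_j})\}$ is Cauchy, hence convergent, in $C$, which finishes the proof.
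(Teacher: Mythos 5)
Your argument is correct and is essentially the paper's own proof: both rest on near-optimal $K$-functional decompositions $a_n=a_n^{0}+a_n^{1}$, $b_n=b_n^{0}+b_n^{1}$ supplied by the hypotheses on $\psi_A$ and $\psi_B$, use compactness of $T\colon A_0\times B_0\to C$ for the main term, and invoke boundedness of $T$ on $(A_0+A_1)\times(B_0+B_1)$ together with the smallness of the $A_1$- and $B_1$-components to kill the remaining terms. Your diagonal extraction over the parameters $t_k$ followed by an $\varepsilon/3$-argument is a slightly more careful way of organizing the final Cauchy step than the paper's single-$t(\varepsilon)$ version, but the substance is identical.
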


\begin{proof}
We may assume without loss of generality that the norms of the inclusion maps
$A\hookrightarrow A_0 + A_1$ and $B\hookrightarrow B_0 + B_1$ are less than or equal to $1$ and that
\begin{align*}
\|T\|_{(A_0+A_1) \times (B_0 + B_1)\to C} \leq 1, \quad\, \max_{j=0, 1}\|T\|_{A_j \times B_j\to C} \leq 1.
\end{align*}
Clearly this implies that $T\colon A \times B_1 \to Z$ and $T\colon A_0 \times B \to Z$ are bounded
bilinear operators with norms less than or equal to $1$.

To simplify notation, we put $\psi_0(t):= \psi_X(1, t)$ and $\psi_1(t):= \psi_Y(1, t)$ for all $t>0$.
Our hypothesis about limits is equivalent to
\begin{align*}
\lim_{t\to \infty} \frac{\psi_0(t)}{t} = 0, \quad\, \lim_{t\to \infty} \frac{\psi_1(t)}{t} = 0.
\end{align*}
Let's fix a~sequence $\{(a_n, b_n)\}$ in the unit ball of $A\times B$. The assumptions on limits allow us
to choose, for a~given $\varepsilon >0$, there exists a~sufficiently large $t>0$, such that
\[
\max\bigg\{\frac{\psi_0(t)}{t}, \, \frac{\psi_1(t)}{t},\,
\frac{\psi_0(t)}{t}\,\frac{\psi_1(t)}{t}\bigg\} < \frac{\varepsilon}{8}.
\]
From the definition of $\psi_X$ and $\psi_Y$, it follows that for all $n\in \mathbb{N}$ and
chosen $t>0$, we have
\[
K(t, a_n; \ao) \leq \psi_0(t), \quad\, K(t, b_n; \bo) \leq \psi_1(t).
\]
Then, for each $n\in \mathbb{N}$, we find the decompositions $a_n = a_n^{0} + a_n^{1}$ and
$b_n = b_n^{0} + b_{n}^1$ with $a_n^{j} \in A_j$ and $b_n^{j} \in B_j$ for each $j\in \{0,1\}$, such that
\begin{align*}
\|a_n^{0}\|_{A_0} + t \|a_n^{1}\|_{A_1} \leq 2 K(t, a_n; \ao), \quad\, \|b_n^{0}\|_{B_0} + t \|b_n^{1}\|_{B_1} \leq 2 K(t, a_n; \ao).
\end{align*}
The combination of these inequalities yields for each $n\in \mathbb{N}$,
\begin{align*}
\|a_n^{0}\|_{A_0} + t \|a_n^{1}\|_{A_1} \leq 2 \psi_0(t), \quad\, \|b_n^{0}\|_{B_0} + t \|b_n^{1}\|_{B_1} \leq 2 \psi_1(t).
\end{align*}
Hence $\{a_n^{0}\}$ and $\{b_n^0\}$ are bounded sequences in $A_0$ and $B_0$, respectively. Since
$T\colon A_0\times B_0 \to C$ is a~compact bilinear operator, by passing to a~subsequence, if necessary,
we may assume that there exists $z\in Z$, such that for some $N=N(\varepsilon)$,
\[
\|T(a_n^{0}, b_n^{0}) - z\|_C < \frac{\varepsilon}{4}, \quad\, n >N.
\]
We claim that $\{T(a_n, b_n)\}$ converges to $z$ in $C$. We may observe that
\[
T(a_n, b_n) = T(a_0^{n}, b_0^{n}) + T(a_0^{n}, b_1^{n}) + T(a_1^{n}, b_n), \quad\, n\in \mathbb{N}.
\]
In combination with the above estimates, for each $n>N$ we have:
\begin{align*}
\|T(a_n, b_n) - z\|_C & \leq \|T(a_0^{n}, b_0^{n})-z\|_C  + \|T(a_n^{0}, b_1^{n})\|_C + \|T(a_1^{n}, b_n)\|_C \\
& \leq \frac{\varepsilon}{4} + \|T(a_n, b_1^{n})\|_C + \|T(a_1^{n}, b_1^{n})\|_C + \|T(a_1^{n}, b_n)\|_C \\
& \leq \frac{\varepsilon}{4} + \|a_n\|_X \|b_1^{n}\|_C + \|a_1^{n}\|_{A_1}\|b_1^{n}\|_{B_1} + \|a_1^{n}\|_{A_1}\|b_n\|_C \\
& \leq \frac{\varepsilon}{4} + 2\frac{\psi_1(t)}{t} + 4 \frac{\psi_0(t)}{t} \frac{\psi_1(t)}{t} + 2 \frac{\psi_1(t)}{t}
\leq \varepsilon.
\end{align*}
This proves the claim and the proof is complete.
\end{proof}

\section{Interpolation of compact bilinear operators on couples with approximation property}

In this section we prove a~key one-sided compactness interpolation theorem for bilinear operators acting on Banach couples which
satisfies the approximation property $(Ap)$. Following \cite{CP}, we recall that a~Banach couple $\ao = (A_0, A_1)$
satisfies the approximation property $(Ap)$ if there is a~sequence $\{P_n\}_{n=1}^{\infty}$ of operators from $A_0 + A_1$ into
$A_0 \cap A_1$ and two other sequences $\{P^{+}_n\}$ and $\{P^{-}_n\}_{n=1}^{\infty}$, of operators from $A_0 + A_1$ into
$A_0 + A_1$, such that
\begin{itemize}
\item[{\rm(I)}] They are uniformly bounded in $\ao$, i.e.,
\[
C:=\sup_{n\in \mathbb{N}} \big\{\|P_n\|_{\ao \to \ao}\,, \|P_{n}^{+}\|_{\ao \to \ao}\,,\|P_n^ {-}\|_{\ao \to \ao}\big\} <\infty.
\]
\item[{\rm(II)}] The identity operator $I$ on $A_0 + A_1$ may be written as
\[
I=P_n + P_{n}^{+} + P_n^{-}, \quad\, n \in \mathbb{N}.
\]
\item[{\rm(III)}] For each $n\in \mathbb{N}$, we have $P^{+}\colon A_0 \to A_1$ and $P^{-}_n \colon A_1 \to A_0$, with
\[
\lim_{n\to \infty} \|P_n^{+}\|_{A_0 \to A_1} = \lim_{n\to \infty} \|P_n^{-}\|_{A_1 \to A_0} = 0.
\]
\end{itemize}

\begin{lemma}
\label{circle}
Suppose that a Banach couple $(A_0, A_1)$ satisfies the approximation property $(Ap)$. Then the following
holds{\rm:}
\begin{itemize}
\item[{\rm(i)}] If $a\in A_0^{\circ}$, then $\|P^{-}_n a\|_{A_0} \to 0$ as $n\to \infty${\rm;}
\item[{\rm(ii)}] If $a \in A_1^{\circ}$, then $\|P^{+}_n a\|_{A_1}\to 0$ as $n\to \infty$.
\end{itemize}
\end{lemma}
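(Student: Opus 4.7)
The plan is a standard density/triangle-inequality argument exploiting that $A_0 \cap A_1$ is dense in $A_0^{\circ}$ (resp.\ $A_1^{\circ}$) and that the operators $P_n^{-}$, $P_n^{+}$ are uniformly bounded on the couple while satisfying the quantitative decay of (III).

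I treat (i) first; the argument for (ii) will be symmetric by swapping the roles of $A_0$ and $A_1$ and of $P_n^{-}$ and $P_n^{+}$. Fix $a \in A_0^{\circ}$ and $\varepsilon > 0$. By definition of $A_0^{\circ}$, choose $b \in A_0 \cap A_1$ with $\|a - b\|_{A_0} < \varepsilon$. Using property (I) applied to the component $A_0 \to A_0$, I bound
\[
\|P_n^{-}(a - b)\|_{A_0} \leq C \|a - b\|_{A_0} < C\varepsilon,
\]
while using property (III), since $b \in A_1$,
\[
\|P_n^{-} b\|_{A_0} \leq \|P_n^{-}\|_{A_1 \to A_0} \|b\|_{A_1} \xrightarrow[n\to\infty]{} 0.
\]
Combining via the triangle inequality gives $\limsup_{n\to\infty} \|P_n^{-} a\|_{A_0} \leq C\varepsilon$, and since $\varepsilon$ is arbitrary this forces $\|P_n^{-} a\|_{A_0} \to 0$.

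For (ii), the same template works: given $a \in A_1^{\circ}$ and $\varepsilon > 0$, pick $b \in A_0 \cap A_1$ with $\|a - b\|_{A_1} < \varepsilon$, estimate
\[
\|P_n^{+}(a - b)\|_{A_1} \leq C\|a-b\|_{A_1} < C\varepsilon
\]
by (I) on the $A_1 \to A_1$ component, and
\[
\|P_n^{+} b\|_{A_1} \leq \|P_n^{+}\|_{A_0 \to A_1} \|b\|_{A_0} \to 0
\]
by (III). The triangle inequality and arbitrariness of $\varepsilon$ finish the proof.

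There is essentially no obstacle here: the only ingredients are the density built into the definition of $A_j^{\circ}$ and the uniform/quantitative bounds in the definition of $(Ap)$. The lemma is a genuinely routine consequence, intended to convert the qualitative decay of (III) on the endpoint spaces into decay on the closed hulls.
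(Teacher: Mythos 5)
Your argument is correct and is precisely the standard density-plus-uniform-boundedness argument that the lemma calls for; the paper itself states Lemma \ref{circle} without proof, evidently regarding this routine verification (going back to Cobos--Peetre) as implicit. Nothing is missing: property (I) gives the component bound $\|P_n^{-}\|_{A_0\to A_0}\leq C$ needed for the perturbation term, and property (III) kills the term coming from the dense element of $A_0\cap A_1$.
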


\vspace{2 mm}

The next theorem is the core for our main result in the following section.

\begin{theorem}
\label{core}
Let $A$, $B$ and $C$ be Banach spaces intermediate to Banach couples $\ao=(A_0, A_1)$, $\bo=(B_0, B_1)$ and
$\co=(C_0, C_1)$, respectively, which satisfy the approximation property $(Ap)$ and $\psi_A(s, 1) \to 0$,
$\psi_B(s, 1) \to 0$ and $\phi_C(s, 1) \to 0$ as $s\to 0$. Assume that $(A, B; C) \in \mathcal{B}_{\varphi}(\ao, \bo; \co)$
and $(A, B; C) \in \mathcal{B}_{\varphi}(\ao^{\circ}, \bo^{\circ}; \co)$ with $\varphi \in \Phi_0$.~Then,
for any bilinear operator $T\colon (A_0, A_1) \times (B_0, B_1) \to (C_0, C_1)$, such that
$T\colon A_0 \times B_0 \to C_0$ is compact, it follows that $T\colon A\times B \to C$ is also compact.
\end{theorem}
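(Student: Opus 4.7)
The plan is to approximate $T$ in the operator norm of $\mathcal{L}_2(A \times B, C)$ by compact bilinear operators $T_n$, and then conclude by closedness of the compact class under uniform limits. Let $\{P_n^A, P_n^{A+}, P_n^{A-}\}$ and $\{P_n^B, P_n^{B+}, P_n^{B-}\}$ be the approximation operators for $\ao$ and $\bo$ provided by $(Ap)$, and set $T_n(a,b) := T(P_n^A a, P_n^B b)$. Since $P_n^A a \in A_0 \cap A_1$ and $P_n^B b \in B_0 \cap B_1$, the image of $T_n$ lies in $C_0 \cap C_1 \hookrightarrow C$.

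I would first verify that each $T_n \colon A \times B \to C$ is compact. For each fixed $n$, the operators $P_n^A \colon A_0 + A_1 \to A_0 \cap A_1$ and $P_n^B \colon B_0 + B_1 \to B_0 \cap B_1$ are bounded, and composing with the inclusions $A \hookrightarrow A_0 + A_1$, $B \hookrightarrow B_0 + B_1$ yields boundedness of $P_n^A \colon A \to A_0$ and $P_n^B \colon B \to B_0$. Hence $T_n \colon A \times B \to C_0$ is compact as the composition of bounded maps with the compact operator $T|_{A_0 \times B_0}$; moreover $T_n$ is bounded into $C_0 \cap C_1$ using in addition the bound on $T|_{A_1 \times B_1}$. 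Lemma~\ref{LP1} and the hypothesis $\phi_C(s,1) \to 0$ then promote this to compactness of $T_n \colon A \times B \to C$.

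The main step is the estimate $\|T - T_n\|_{A \times B \to C} \to 0$ as $n \to \infty$. By bilinearity,
\[
T(a,b) - T_n(a,b) = T((P_n^{A+} + P_n^{A-})a,\, b) + T(P_n^A a,\, (P_n^{B+} + P_n^{B-})b),
\]
which expands into eight bilinear pieces of the form $T(P_n^{A\ast}a, P_n^{B\ast'}b)$, each carrying at least one ``small'' $(Ap)$ factor. The preliminary observation is that combining the small cross-norms $\|P_n^{A+}\|_{A_0 \to A_1}, \|P_n^{A-}\|_{A_1 \to A_0} \to 0$ with $\psi_A(s,1) \to 0$ yields, through a $K$-functional estimate (for $\|a\|_A \leq 1$ one has $\|P_n^{A+} a\|_{A_1} \leq K(\|P_n^{A+}\|_{A_0 \to A_1}, C, a; \ao) \leq C \psi_A(\|P_n^{A+}\|_{A_0 \to A_1}/C, 1)$), the convergence $\|P_n^{A+}\|_{A \to A_1}, \|P_n^{A-}\|_{A \to A_0} \to 0$ and similarly for $B$. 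For the pieces whose factors map cleanly into a common endpoint $A_j \times B_j \to C_j$, inserting these estimates into $\mathcal{B}_{\varphi}(\ao, \bo; \co)$ and using $\varphi \in \Phi_0$ forces the $A \times B \to C$ operator norm to vanish. The remaining mixed pieces (where $P_n^{A+} a \in A_1$ is paired with $P_n^{B-} b \in B_0$, or vice versa, so neither endpoint bound of $T$ applies directly) are handled by invoking the regular sub-couple hypothesis $\mathcal{B}_{\varphi}(\ao^{\circ}, \bo^{\circ}; \co)$ combined with Lemma~\ref{circle}, which provides the pointwise decay on the regular parts needed to close the estimate.

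The main obstacle will be precisely the uniform control of these mixed pieces: the $(Ap)$ structure supplies smallness on the \emph{mismatched} endpoints $A_j \to A_{1-j}$, while the standard $\mathcal{B}_\varphi$ reads off only the \emph{matched} norms $A_j \times B_j \to C_j$, so both bilinear interpolation hypotheses (on $(\ao, \bo)$ and on $(\ao^{\circ}, \bo^{\circ})$) must be used in tandem, together with the monotonicity and $\Phi_0$-boundary behavior of $\varphi$, to bridge this gap and obtain operator-norm convergence.
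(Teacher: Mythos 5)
Your overall strategy --- approximate $T$ in the norm of $\mathcal{L}_2(A\times B, C)$ by compact bilinear operators built from the $(Ap)$ families and invoke closedness of the compacts under uniform limits --- is the same as the paper's, and your first decomposition of $T-T_n$ into the eight pieces $T(P_n^{\ast},Q_n^{\ast'})$ matches the paper's starting point. But there is a genuine gap: two of those eight pieces, namely $T(P_n,Q_n^{-})$ and $T(P_n^{-},Q_n^{-})$ (in the paper's notation, with $Q_n^{\ast}$ the family for $\bo$), do \emph{not} have vanishing $A\times B\to C$ norm, and no amount of massaging with $\mathcal{B}_{\varphi}$ will make them small. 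The reason is that they contain no decaying factor at all: $Q_n^{-}$ is small only as a map $B_1\to B_0$, while as maps $B_j\to B_j$ all of $P_n, P_n^{-}, Q_n^{-}$ are merely uniformly bounded by the constant in $(Ap)$(I). Hence both matched endpoint norms $\|T(P_n,Q_n^{-})\|_{A_j\times B_j\to C_j}$ stay bounded away from nothing in particular, $\varphi\in\Phi_0$ gives only a uniform bound, and the ``compactness forces the limit to be zero'' trick fails because $T(P_n a_n, Q_n^{-}b_n)$ does not tend to $0$ in $C_0+C_1$. These two terms are not error terms; they are (mostly) \emph{compact} terms, and your approximant $T_n=T(P_n,Q_n)$ is too coarse to absorb them.

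The paper's fix, which your proposal is missing entirely, is to use the third $(Ap)$ family $\{R_n, R_n^{+}, R_n^{-}\}$ on the \emph{target} couple $\co$: writing $I=R_n+R_n^{+}+R_n^{-}$ on $C_0+C_1$, the parts $R_nT(P_n,Q_n^{-})$, $R_n^{+}T(P_n,Q_n^{-})$, $R_nT(P_n^{-},Q_n^{-})$, $R_n^{+}T(P_n^{-},Q_n^{-})$ are shown to be compact from $A\times B$ to $C$ via Lemmas \ref{LP1} and \ref{LP2} (they land in $C_0\cap C_1$ from $C_0$), while only $R_n^{-}T(\cdot,\cdot)$ is small, and its smallness is exactly where Lemma \ref{circle} enters (the $C_0$-limit of $T(P_na_n,Q_n^{-}b_n)$ lies in $C_0^{\circ}$, so $R_n^{-}$ kills it). Relatedly, you have misidentified the troublesome terms: the ``mixed'' pieces you single out, $T(P_n^{+},Q_n^{-})$ and $T(P_n^{-},Q_n^{+})$, are actually unproblematic --- both matched endpoint restrictions are bounded since each $(Ap)$ operator is bounded on each $A_j$ and $B_j$, and their $A_0\times B_0\to C_0$ norms do tend to $0$ by the compactness argument because $\|P_n^{+}\|_{A_0\to A_1}\to 0$ forces the $C_0+C_1$-limit to vanish. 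Your appeal to Lemma \ref{circle} and to $\mathcal{B}_{\varphi}(\ao^{\circ},\bo^{\circ};\co)$ is therefore not attached to any concrete term of your decomposition; to close the proof you must refine the decomposition on the target side as above.
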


\begin{proof}
Let $\{P_{n}\}$, $\{P^{+}_{n}\}$, $\{P^{-}_{n}\}$, $\{Q_{n}\}$, $\{Q^{+}_{n}\}$, $\{Q^{-}_{n}\}$ and $\{R_{n}\}$,
$\{R^{+}_{n}\}$, $\{R^{-}_{n}\}$ be the corresponding approximating sequences in the Banach couples $\ao$, $\bo$ and $\co$,
respectively, satisfying the approximation property $(Ap)$.

To prove that $T\colon A \times B \rightarrow C$ is compact, we consider the following decomposition:
\begin{align*}
T & = T(P_n + P_{n}^+ + P_{n}^-,Q_n + Q_{n}^+ + Q_{n}^-) = T(P_n,Q_n) + T(P_n,Q_{n}^+) + T(P_n,Q_{n}^-) \\
&\,\,\,\,\,+ T(P_{n}^+,Q_n) + T(P_{n}^+,Q_n^+) + T(P_n^+,Q_n^-) + T(P_n^-,Q_n) + T(P_n^-,Q_n^+) + T(P_n^-,Q_n^-) \\
& =  T(P_n,Q_n) + T(P_n,Q_{n}^+) + (R_n + R_n^+ + R_n^-)T(P_n,Q_{n}^-) + T(P_{n}^+,Q_n) + T(P_{n}^+,Q_n^+) \\
&\,\,\,\,\,+ T(P_n^+,Q_n^-) + T(P_n^-,Q_n) + T(P_n^-,Q_n^+) + (R_n + R_n^+ + R_n^-)T(P_n^-,Q_n^-) \\
& = T(P_n,Q_n) + T(P_n,Q_{n}^+) + R_n T(P_n,Q_{n}^-) + R_n^+ T(P_n,Q_{n}^-) + R_n^- T(P_n,Q_{n}^-) \\
&\,\,\,\,\,+ T(P_{n}^+,Q_n) + T(P_{n}^+,Q_n^+) + T(P_n^+,Q_n^-) + T(P_n^-,Q_n) + T(P_n^-,Q_n^+) \\
&\,\,\,\,\,+ R_n T(P_n^-,Q_n^-) + R_n^+ T(P_n^-,Q_n^-) + R_n^- T(P_n^-,Q_n^-).
\end{align*}
We claim that each one of the bilinear operators: $T(P_n, Q_n)$, $R_n T(P_n,Q_n^{-})$, $R_n T(P_n^{-}, Q_n^{-})$,
$R_n^+ T(P_n,Q_{n}^{-})$ and $R_n^{+}T(P_n^{-}, Q_n^{-})\}$ are compact from $A \times B$ to $C$, for each $n\in \mathbb{N}$.

\vspace{2 mm}

\noindent
Several steps are required.

(i) We start with $T(P_n, Q_n)$ by using the following factorization for $j\in \{0,1\}$:
\[
T(P_n,Q_n) \colon A \times B \stackrel{(P_{n},Q_{n})}{\longrightarrow} (A_{0} \cap A_{1})
\times (B_{0} \cap B_{1}) \hookrightarrow A_{j} \times B_{j} \stackrel{T}{\longrightarrow} C_{j},
\]
Since $T\colon A_0 \times B_0 \to C_0$ is compact, it follows, by Lemma 2.4, that the bilinear operator
$T(P_{n},Q_{n})\colon A\times B \to C$ is compact.

(ii) Using the following factorization of $R_n T (P_{n}, Q^{-}_{n})$, for each $j\in \{0, 1\}$:
\[
R_n T (P_{n}, Q^{-}_{n})\colon A_j \times B_j \stackrel{T(P_{n},Q_{n}^-)}{\longrightarrow} C_j
\stackrel{R_n}{\longrightarrow} C_0 \cap C_1  \hookrightarrow C,
\]
we conclude, by Lemma \ref{LP2}, that $R_n T (P_{n}, Q^{-}_{n})\colon A \times B \rightarrow C$ is compact operator.

(iii) Now let us consider the following factorization for $j\in \{0,1\}$,
\[
R_{n}T(P_n^{-}, Q_n^{-})\colon A_j \times B_j \stackrel{(P_{n}^{-}, Q_{n}^{-})}{\longrightarrow} A_j
\times B_j \stackrel{T}{\longrightarrow} C_{j} \stackrel{R_n}{\longrightarrow} C_{0} \cap C_1 \hookrightarrow C.
\]
Since $T\colon A_0 \times B_0 \to C_0$, Lemma \ref{LP2} applies. Therefore,  $R_{n}T(P_{n}^{-}, Q_{n}^{-})$
is a~compact operator from $A \times B$ to $C$.

(iv) To show the compactness of $R_n^{+}T(P_{n}, Q^{-}_{n})$ for each $n$ and since $T\colon A_0 \times B_0 \to C_0$
is compact, we observe that
\[
T(P_{n}, Q_{n}^-) \colon A_0 \times B_0 \stackrel{(P_{n}, Q_{n}^-)}{\longrightarrow} A_0 \times B_0
\stackrel{T}{\longrightarrow} C_{0},
\]
is also compact. Since $R_{n}^{+}\colon (C_0, C_0) \to (C_0, C_1)$, $R_{n}^{+}\colon C_0 \to C_0 \cap C_1$ is a~bounded
operator for each $n\in \mathbb{N}$. Consequently, we conclude that $R_{n}^{+}T(P_{n}, Q^{-}_{n})\colon A_0 \times B_0
\to C_0 \cap C_1$ is a~compact operator compact. Then, by Lemma \ref{LP1}, it follows that
$R_{n}^{+}T(P_{n}, Q^{-}_{n})\colon A \times B \to C$ is also compact.

We show that all sequences of norms of bilinear operators from $A\times B$ to $C$ have limit equal to
$0$: $\{\|T(P_n, Q_{n}^{+})\|\}$, $\{\|R_n^{-}T(P_n, Q_{n}^{-})\|\}$, $\{\|T(P_{n}^{+}, Q_n)\|\}$,
$\{\|T(P_{n}^{+}, Q_n^{+})\|\}$, $\{\|T(P_n^{+}, Q_n^{-})\|\}$, $\{\|R_n^{-}T(P_n, Q_{n}^{-})\|\}$, $\{\|T(P_{n}^{+}, Q_n)\|\}$,
$\{\|T(P_{n}^{+}, Q_n^{+})\|\}$, $\{\|T(P_n^{+}, Q_n^{-})\|\}$, \linebreak
$\{\|T(P_n^{-}, Q_n)\|\}$, $\{\|T(P_n^{-}, Q_n^{+})\|\}$, $\{\|R_n^{-}T(P_n^{-}, Q_n^{-})\|\}$.

\vspace{1.5 mm}

(v) We prove that
\[
\lim_{n \to \infty} \|T(P_{n}, Q^{+}_{n})\|_{A \times B \to C} = 0.
\]
Observe that our hypothesis $(A, B; C) \in \mathcal{B}_{\varphi}(\ao, \bo; \co)$ yields
\begin{align*}
\|T(P_{n}, Q^{+}_{n})\|_{A \times B \to C} & \leq \varphi(\|T(P_{n}, Q^{+}_{n})\|_{A_0 \times B_0 \to C_0},
\|T(P_{n}, Q^{+}_{n})\|_{A_1 \times B_1 \to C_1}) \\
& \leq  C \varphi(\|T(P_{n}, Q^{+}_{n})||_{A_0 \times B_0 \to C_0}, 1),
\end{align*}
where $C \leq \max\{1, \sup_{n\geq 1}\|T(P_{n}, Q^{+}_{n})\|_{A_1 \times B_1 \to C_1}\}<\infty$. Thus, it is enough
to prove that $\|T(P_{n}, Q^{+}_{n})\|_{A_0 \times B_0 \to C_0} \to 0$ as $n \to \infty$. Suppose that this is false.
By passing  to a~subsequence, we may assume, without loss of generality, that for some $\lambda>0$,
\[
\lim_{n\to \infty} \|T(P_{n},Q^{+}_{n})\|_{A_0 \times B_0 \to C_0} = \lambda.
\]
It is clear that the sequence $\{T(P_{n}, Q^{+}_{n})\}$ of bilinear operators is uniformly bounded in
$A_{0} \times B_{0}$. Thus, Proposition \ref{lambda} shows that, by passing to a~subsequence, if necessary,
we may assume, without loss of generality, that there exists a~sequence $\{(a_n, b_n)\}$ in the
unit ball of $A_0 \times B_0$, such that $\|T(P_{n}, Q^{+}_{n})\|_{A_0 \times B_0 \to C_0} \to \lambda$ as
$n\to \infty$ and
\[
\lim_{n \to \infty} \|T(P_{n} a_{n}, Q^{+}_{n}b_{n})\|_{C_{0}}= \lambda.
\]
Since $T\colon A_0 \times B_0 \to C_0$ is compact, by passing a~subsequence, if necessary, we may assume
that $\{T(P_{n}a_{n}, Q^{+}_{n} b_{n})\}$ converges to some element $b$ in $C_0$ with $\|b\|_{C_0} = \lambda$.
We now observe that we have with $K:=\|T\|_{(A_0 + A_1)\times (B_0 + B_1)\,\to C_0 + C_1}$
\begin{align*}
\|T(P_{n} a_{n}, \; Q^{+}_{n} b_{n})\|_{C_0 + C_1} &
\leq K\,\|P_{n}a_{n}\|_{A_{0} + A_{1}} \|Q^{+}_{n} b_{n}\|_{B_0 + B_1} \\
& \leq K \,\|P_{n} a_{n}\|_{A_{0}} ||Q^{+}_{n}b_{n}\|_{B_{1}} \\
& \leq K \,\|P_{n}\|_{A_0 \to B_0}\|a_{n}\|_{A_0}\|Q^{+}_{n}\|_{B_0 \to B_1}\|b_{n}\|_{B_0}.
\end{align*}
Finally, note that $\|Q^{+}_{n}\|_{B_0 \to B_1} \to 0$ as $n \to \infty$ implies
$T(P_{n} a_{n}, Q^{+}_{n} b_{n}) \to 0$ in $C_0 + C_1$ as $n \to \infty$. Hence $b = 0$ and so $\lambda = 0$,
which is a~contradiction.

(vi) Our next task is to prove that $\|R_n^{-}T (P_{n}, Q^{-}_{n})\|_{A\times B \to C}\to 0$ as $n\to \infty$.
Similarly, we have
\begin{align*}
\|R_n^{-}T(P_{n}, Q_n^{-})\|_{A \times B \to C} & \leq K\,\varphi\big(\|R_n^- T(P_{n}, Q_n^{-})\|_{A_0 \times B_0 \to C_0},
\|R_n^{-}T(P_{n}, Q_n^{-})\|_{A_1 \times B_1 \to C_1}\big) \\
& \leq \widetilde{K} \varphi\big(\|R_n^{-}T(P_{n},Q^{-}_{n})\|_{A_0 \times B_0 \to C_0}, 1\big),
\end{align*}
for some constant $\widetilde{K}>0$ independent of $n$. Since the sequences of operators $\{P_n\}$, $\{Q_n^{-}\}$ and
$\{R_n^{-}\}$ are uniformly bounded, it is enough to prove that
\[
\lim_{n\to \infty} \|R_n^{-}T(P_n,Q_n^{-})\|_{A_0 \times B_0 \to C_0} = 0.
\]
Suppose, by a~contradiction, that this is not true. Then passing to a~subsequence, if necessary, we may assume that for
some $\delta>0$
\[
\lim_{n\to \infty} \|R_n^{-}T (P_{n},Q_{n}^{-})\|_{A_0 \times B_0 \to C_0} = \delta.
\]
Applying Proposition \ref{lambda}, we conclude that there exists a~sequence $\{(a_n, b_n)\}$ in the unit ball of
$A_0 \times B_0$ with $a_n \in A_0 \cap A_0$ and $b_n \in B_0 \cap B_1$, such that
\[
\lim_{n\to \infty} \|R_{n}^{-}T (P_{n}a_{n}, Q_{n}^{-}b_{n})\|_{C_0} = \delta.
\]
Since the sequence $\{(P_{n}a_{n}, Q_{n}^{-}b_{n})\}$ is bounded in $A_0 \times B_0$ and $T\colon A_0 \times B_0 \to C_0$
is compact by passing, if necessary, to a~subsequence, we may assume that the sequence $\{T(P_{n}a_n, Q_{n}^{-}b_n)\}$
converges in $C_0$ to some $c$. Thus, for $N$ large enough, we have
\[
\|R_{n}^{-}c\|_{C_0} > \frac{\delta}{2}, \quad\, n>N.
\]
Observe that $(a_n, b_n) \in (A_0 \cap A_1) \times (B_0 \cap B_1)$ implies $\{T(P_{n}a_{n}, Q_{n}^{-}b_{n})\} \subset C_0 \cap C_1$
for each $n$. Recall that $c= \lim_{n \to \infty} T(P_{n}a_{n}, Q_{n}^{-}b_{n})$ in $C_0$ and so $c \in C_0^{\circ}$.
Then, by Lemma \ref{circle},
\[
\lim_{n \to \infty}\|R_{n}^{-}c\|_{C_0} = 0,
\]
which is a contradiction with the above estimate.

(vii) To prove that
\begin{align*}
\lim_{n \to \infty} \|T(P_n^{+}, Q_n)\|_{A\times B \to C} & = \lim_{n \to \infty} \|T(P_n^{+}, Q_n^{+})\|_{A\times B \to C}
= \lim_{n \to \infty} \|T(P_n^{+}, Q_n^{-})\|_{A\times B \to C} \\
& = \lim_{n \to \infty} \|T(P_n^{-}, Q_n)\|_{A\times B\to C} = \lim_{n \to \infty} \|T(P_n^{-}, Q_n^{+})\|_{A\times B\to C} = 0,
\end{align*}
we use our hypothesis $\varphi \in \Phi_0$ and we proceed similarly as in (v).

(viii) For the sequence $\{R_n^{+}T(P_{n}^{-}, Q_n^{-})\}$, we have
\begin{align*}
\|R_n^{+}T(P_{n}^{-}, Q_n^{-})\|_{A \times B \to C} & \leq M
\varphi(\|R_n^{+}T(P_{n}^{-}, Q^{-}_{n})\|_{A_0 \times B_0 \to C_0}, \|R_n^{+}T(P_{n}^{-}, Q^{-}_{n})\|_{A_1 \times B_1 \to C_1}) \\
& \leq M \varphi(1, \|R_n^{+}T(P_n^{-}, Q_n^{-})\|_{A_1 \times B_1 \to C_1}),
\end{align*}
where $M:= \max\big\{1, \sup_{n\geq 1} \|R_n^{+}T(P_{n}^{-}, Q^{-}_{n})\|_{A_0 \times B_0 \to C_0}\big\}< \infty$. By approximation
property $(Ap)$, the sequences of operators $\{P_n^{-}\}$, $\{Q_n^{-}\}$ and $\{R_n^{+}\}$ are uniformly bounded. Thus it is sufficient
to show that $\|R_n^{+}T (P_n^{-}, Q_n^{-})\|_{A_1 \times B_1 \to C_1} \to 0$ as $n\to \infty$. Consequently, we combine the factorization
\begin{align*}
R_n^{+}T (P_n^{-}, Q_n^{-})\colon A_1 \times B_1 \stackrel{(P_{n}^-,Q_{n}^{-})}{\longrightarrow} A_0 \times B_0
\stackrel{T}{\longrightarrow} C_0 \stackrel{R_{n}^{+}}{\longrightarrow} C_{1},
\end{align*}
with the estimate
\begin{align*}
\|R_n^{+}T(P_n^{-}, Q_n^{-})\|_{A_1 \times B_1\to C_1} \leq \|R_n^{+}\|_{C_0 \to C_1}\,
\|T\|_{A_0 \times B_0 \to C_0}\|P_n^{-}\|_{A_1 \to A_0}\, \|Q_n^{-}\|_{B_1 \to B_0},
\end{align*}
to deduce, by the approximation property $(Ap)$, that
\[
\lim_{n\to \infty} \|R_n^{+}T(P_{n}^{-}, Q^{-}_{n})\|_{A_1 \times B_1\to C_1} = 0.
\]
We proceed similarly as in the (v) to obtain $\|R_n^{-}T(P_{n}^{-}, Q^{-}_{n})\|_{A\times B\to C} \to 0$ as $n\to \infty$.
The proof is complete.
\end{proof}

\section{Bilinear compactness theorem for Aronszajn--Gagliardo functors}

In this section we apply our results to bilinear operators acting from the product of interpolation spaces generated
by orbit functors to Banach spaces generated by coorbit functors, in the sense of Aronszajn-Gagliardo. We recall two
important constructions of the abstract interpolation theory by Aronszajn and Gagliardo \cite{AG}.

As usual, for non-empty set $\Gamma$ and any Banach space $X$, we denote by $\ell_1(\Gamma, X)$ (resp., $\ell_{\infty}(\Gamma, X)$
the Banach space of all absolutely summable (resp., bounded) families $\{x_{\gamma}\}_{\gamma \in \Gamma}$ of elements of $X$
indexed by $\Gamma$ and equipped with the norm
\[
\|\{x_\gamma\}\|_{\ell_1(\Gamma, X)} = \sum_{\gamma \in \Gamma} \|x_\gamma\|_{X}
\]
\big(resp.,
\[
\|\{x_\gamma\}\|_{\ell_\infty(\Gamma, X)} = \sup_{\gamma \in \Gamma} \|x_\gamma\|_{X}\big).
\]

Let $\ao = (A_0, A_1)$ be a fixed Banach couple and let $A$ be a~fixed intermediate space with respect to $\ao$.
If $\xo = (X_0, X_1)$ is any Banach couple and $\Gamma:=B_{L(\ao, \xo)}$ is the unit ball of the Banach space
$L(\ao, \xo)$, then we define a~mapping $\pi_{\ao}\colon \ell_1(\Gamma, A_0 + A_1) \to X_0 + X_1$ by the formula,
\[
\pi_{\ao}\{a_S\} = \sum_{S\in \Gamma} S(a_S), \quad\, \{a_S\}\in \ell_1(\Gamma, A_0 + A_1).
\]
If the unit ball $B_{L(\xo, \ao)}$ of the Banach space $L(\xo, \ao)$ is denoted by $J$, for short, we also define
an operator $i_{\ao}\colon X_0 + X_1 \to \ell_\infty(J,A_0 + A_1)$ by the formula
\[
i_{\ao}x = \{Tx\}_{T\in J}, \quad\, x\in X_{0} + X_{1}.
\]
The Aronszajn--Gagliardo exact interpolation functors are defined by
\[
Orb^{\ao}_A(\xo) := \big\{x \in X_0 + X_1; \, x= \pi_{\ao}\{a_S\} \,\,\,\,\text{for some $\{a_S\} \in \ell_1(J, A)$} \big\}
\]
and
\[
Corb^{\ao}_A(\xo) := \big\{x\in X_0 + X_{1};\, i_{\ao}x \in \ell_{\infty}(J, A)\big\}.
\]
Norms in these Banach spaces are given by
\[
\|x\|_{Orb^{\ao}_A(\xo)} := \inf\Big\{\sum_{S\in J} \|a_S\|_{A}; \, x = \pi_{\ao}\{a_S\}\Big\}.
\]
and, respectively
\[
\|x\|_{Corb^{\ao}_A(\xo)} = \|i_{\ao}x\|_{\ell_{\infty}(J, A)} = \sup\big\{\|Tx\|_A;\, \|T\|_{\xo \to \ao} \leq 1\big\}.
\]
For simplicity, we often write $G^{\ao}_A$  (resp., $H^{\ao}_A$) instead of $Orb^{\ao}_A$ (resp., $Corb^{\ao}_A$).
Note that $G^{\ao}_A$ is the minimal interpolation functor satisfying $A\hookrightarrow G^{\ao}_A(\ao)$ and $H^{\ao}_A$ is
the maximal interpolation functor satisfying $H^{\ao}_A(\ao) \hookrightarrow A$.

We will use the following result.

\begin{theorem}
\label{lift}
Assume that $(A, B; C) \in \mathcal{B}_{\varphi}(\ao, \bo; \co)$ with $\varphi \in \Phi$. Then, for any Banach couples
$\xo= (X_0, X_1)$, $\yo= (Y_0, Y_1)$ and $\zo=(Z_0, Z_1)$,
\[
\big(Orb_{A}^{\ao}(\xo), Orb_{B}^{\bo}(\yo); Corb_{C}^{\co}(\zo)\big) \in \mathcal{B}_{\varphi} (\xo, \yo; \zo).
\]
\end{theorem}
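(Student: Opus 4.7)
The plan is to reduce the bilinear estimate on the orbit--coorbit triple to the hypothesized estimate on $(A,B;C)$, exploiting the universal property of both constructions. Fix a bilinear operator $T\colon \xo \times \yo \to \zo$ with $M_j := \|T\|_{X_j\times Y_j \to Z_j}$ and set $M := \varphi(M_0, M_1)$. Write $J_A := B_{L(\ao,\xo)}$, $J_B := B_{L(\bo,\yo)}$, and $J_C := B_{L(\zo,\co)}$. The central observation is that for every triple $(S,R,U) \in J_A \times J_B \times J_C$, the composite
\[
\tilde T_{S,R,U}\colon \ao \times \bo \to \co,\quad \tilde T_{S,R,U}(a,b) := U\,T(Sa,Rb),
\]
is a well-defined bilinear operator, and its norm on the endpoint $A_j \times B_j \to C_j$ is controlled by $\|U|_{Z_j}\|\,M_j\,\|S|_{A_j}\|\,\|R|_{B_j}\| \leq M_j$.

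Applying the hypothesis $(A,B;C) \in \mathcal{B}_\varphi(\ao,\bo;\co)$ to $\tilde T_{S,R,U}$ and using monotonicity of $\varphi$ then gives, uniformly in $(S,R,U)$,
\[
\|U\,T(Sa,Rb)\|_C \leq M\,\|a\|_A\,\|b\|_B,\quad\, (a,b)\in A\times B.
\]
Now take $x \in Orb^{\ao}_A(\xo)$ and $y \in Orb^{\bo}_B(\yo)$ with representations $x = \pi_{\ao}\{a_S\}_{S\in J_A}$, $y = \pi_{\bo}\{b_R\}_{R\in J_B}$, where $\{a_S\}\in \ell_1(J_A,A)$, $\{b_R\}\in \ell_1(J_B,B)$. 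For fixed $U\in J_C$, bilinearity of $T$ and continuity of $U$ from $Z_0+Z_1$ into $C_0+C_1$ allow us to compute $UT(x,y)$ by unfolding the orbit sums. Applying the pointwise estimate term by term,
\[
\sum_{S,R}\|\tilde T_{S,R,U}(a_S,b_R)\|_C \leq M\sum_S\|a_S\|_A\,\sum_R\|b_R\|_B,
\]
so the double series converges absolutely in $C$ to some $w\in C$. Since $C \hookrightarrow C_0 + C_1$ and the same partial sums converge in $C_0+C_1$ to $UT(x,y)$, we conclude $w = UT(x,y)\in C$ with $\|UT(x,y)\|_C \leq M\sum_S\|a_S\|_A\sum_R\|b_R\|_B$.

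Taking the infimum over admissible representations of $x$ and $y$ yields $\|UT(x,y)\|_C \leq M\,\|x\|_{Orb^{\ao}_A(\xo)}\,\|y\|_{Orb^{\bo}_B(\yo)}$ for every $U\in J_C$, and the supremum over $U$ gives, by the very definition of the coorbit norm, $\|T(x,y)\|_{Corb^{\co}_C(\zo)} \leq M\,\|x\|_{Orb^{\ao}_A(\xo)}\,\|y\|_{Orb^{\bo}_B(\yo)}$, which is the required $\varphi$-bilinear interpolation estimate. The main potential obstacle is the bookkeeping of the double series in $C$ versus $C_0+C_1$: one must ensure that $U\circ T$ behaves well on the partial sums $\sum_{S\in F_1} Sa_S$ and $\sum_{R\in F_2}Rb_R$ as $F_1, F_2$ exhaust $J_A, J_B$, which is why the passage is phrased through $\tilde T_{S,R,U}$, whose action on the coefficient sequences $\{a_S\}, \{b_R\}$ sidesteps any need to estimate $T$ on mixed cross-terms inside $\zo$.
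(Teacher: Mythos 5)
Your proposal is correct and follows essentially the same route as the paper: both reduce to the hypothesis on $(A,B;C)$ by composing $T$ with operators $S\in B_{L(\ao,\xo)}$, $R\in B_{L(\bo,\yo)}$, $U\in B_{L(\zo,\co)}$ to form a bilinear operator $\ao\times\bo\to\co$, and then handle general orbit elements by unfolding the $\ell_1$ representations into an absolutely convergent double series before taking the infimum over representations and the supremum defining the coorbit norm. The only difference is bookkeeping (you fix $U$ and sum in $C$, then take the supremum; the paper takes the supremum over $U$ first and sums in $Corb_{C}^{\co}(\zo)$), which does not change the argument.
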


\begin{proof}
Fix $T\colon \xo \times \yo \to \zo$. Assume that $(x, y)\in G_{A}^{\ao}(\xo) \times G_{B}^{\bo}(\yo)$, where
$x=Ua$ and $y=Vb$ with $(a, b)\in A\times B$, $U\colon \ao \to \xo$ and $V\colon \bo \to \yo$. Then, for a~given
operator $R\colon \zo \to \co$ with $\|R\|_{\zo \to \co}\leq 1$, we define a~bilinear operator
$S_{R}\colon (A_0 + A_1)\times (B_0 + B_1) \to C_0 + C_1$ by $S_{R}:= RT(U,V)$. Observe that for each $j\in \{0, 1\}$
and every $(a_j, b_j) \in A_j \times B_j$,
\begin{align*}
\|S_R(a_j, b_j)\|_{C_j}& \leq
\|R\|_{\zo\to \co}\,\|T\|_{X_j\times Y_j \to Z_j}\|U\|_{\ao\to \xo} \|V\|_{\bo\to \yo}\|a_j\|_{A_j}\,\|b_j\|_{B_j}\\
& \leq \|U\|_{\ao\to \xo} \|V\|_{\bo\to \yo}\,\|T\|_{X_j\times Y_j \to Z_j} \|a_j\|_{A_j}\,\|b_j\|_{B_j}.
\end{align*}
This implies that $S_{R}\colon \ao \times \bo \to \co$ with
\[
\|S_{R}\|_{\ao \times \bo \to \co} \leq \|U\|_{\ao\to \xo} \|V\|_{\bo\to \yo}\max_{j=0, 1}\|T\|_{X_j\times Y_j \to Z_j}.
\]
Thus, it follows, by our hypothesis $(A, B; C) \in \mathcal{B}_{\varphi}(\ao, \bo; \co)$, that
\[
\|S_{R}\|_{A\times B\to C} \leq \varphi(\|T\|_{X_0\times Y_0\to Z_0}, \|T\|_{X_1\times Y_1 \to Z_1})\|U\|_{\ao\to \xo} \|V\|_{\bo\to \yo}.
\]
and so,
\begin{align*}
\sup_{\|R\|_{\zo \to \co} \leq 1} \|R(T(x, y))\|_C & = \sup_{\|R\|_{\zo \to \co}\leq 1} \|R(T(Ua, Vb))\|_C =
\sup_{\|R\|_{\zo \to \co} \leq 1} \|S_{R}(a, b)\|_C \\
&\leq  \varphi(\|T\|_{X_0\times Y_0\to Z_0}, \|T\|_{X_1\times Y_1 \to Z_1})\|U\|_{\ao\to \xo} \|V\|_{\bo\to \yo}\|a\|_{A}\,\|b\|_{B}.
\end{align*}
This proves that $T(x, y) = T(Ua, Vb) \in H_{C}^{\co}(\zo)$ with
\[
\tag {$*$} \|T(x,y)\|_{H_{C}^{\co}(\zo)} \leq \varphi(\|T\|_{X_0\times Y_0\to Z_0}, \|T\|_{X_1 \times Y_1 \to Z_1})\|U\|_{\ao\to \xo} \|V\|_{\bo\to \yo}\|a\|_{A}\,\|b\|_{B}.
\]
We now assume that $(x, y) \in G_{A}^{\ao}(\xo) \times G_{B}^{\bo}(\yo)$ and consider arbitrary representations
\[
x= \sum_{i=1}^{\infty} U_{i} a_i, \quad\, y= \sum_{j=1}^{\infty} V_{j}b_j
\]
with
\[
\sum_{i=1}^{\infty} \|U_i\|_{\ao \to \xo} \|a_i\|_{A}<\infty, \quad\, \sum_{j=1}^{\infty} \|V_j\|_{\bo \to \yo}\|b_j\|_{B} <\infty.
\]
It is clear that the above series converge into $A_0 + A_1$ and $B_0 + B_1$, respectively. It follows (since $T$ is a~bilinear
operator from $(A_0 + A_1)\times (B_0 + B_1)$ to $C_0 + C_1$) that the following double series converges into $C_0 + C_1$ to $T(x, y)$,
\begin{align*}
T(x, y) = \sum_{i=1}^{\infty} \sum_{j=1}^{\infty} T(U_ia_i, V_jb_j).
\end{align*}
Applying estimate $(*)$, we obtain
\begin{align*}
& \|T(x, y)\|_{H_{C}^{\co}(\zo)} \leq \sum_{i=1}^{\infty} \sum_{j=1}^{\infty}\| T(U_ia_i, V_jb_j)\|_{H_{C}^{\co}(\zo)} \\
& \leq \varphi(\|T\|_{X_0\times Y_0\to Z_0}, \|T\|_{X_1\times Y_1 \to Z_1})\sum_{i=1}^{\infty} \sum_{j=1}^{\infty}
\|U_i\|_{\ao\to \xo} \|V_j\|_{\bo\to \yo}\|a_i\|_{A}\,\|b_j\|_{B}\\
& = \varphi(\|T\|_{X_0\times Y_0\to Z_0}, \|T\|_{X_1 \times Y_1 \to Z_1}) \Big(\sum_{i=1}^{\infty} \|U_i\|_{\ao \to \xo} \|a_i\|_{A}\Big)
\Big(\sum_{j=1}^{\infty} \|V_j\|_{\bo \to \yo}\|b_j\|_{B}\Big).
\end{align*}
Since the representations of $x\in G_{A}^{\ao}(\xo)$ and $y\in G_{B}^{\bo}(\yo)$ are arbitrary, we conclude that
$T\colon G_{A}^{\ao}(\xo)\times  G_{B}^{\bo}(\yo)\to H_{C}^{\co}(\zo)$ with
\[
\|T(x, y)\|_{H_{C}^{\co}(\zo)} \leq \varphi(\|T\|_{X_0\times Y_0\to Z_0}, \|T\|_{X_1 \times Y_1 \to Z_1})\|x\|_{G_{A}^{\ao}(\xo)}\,
\|y\|_{G_{B}^{\bo}(\yo)}.
\]
This proves that $(G_{A}^{\ao}(\xo), G_{B}^{\bo}(\yo); H_{C}^{\co}(\zo)) \in \mathcal{B}_{\varphi}(\xo, \yo; \zo)$, as required.
\end{proof}

To prove the main result of this section, we use the continuous inclusions from \cite[Lemmas 2.1 and 3.1]{CP}.
We state these inclusions for the sake of completeness and convenience of the readers.

\begin{lemma}
\label{inclusions}
Let $I$ be any non-empty set and let $A$ be a~Banach space intermediate with respect to a Banach couple
$\ao=(A_0, A_1)$. Then the following continuous inclusions hold{\rm:}
\[
\ell_1(I, G_{A}^{\ao}(A_0, A_1)) \hookrightarrow G_{A}^{\ao}(\ell_1(I, A_0), \ell_1(I, A_1)),
\]
\[
H_{A}^{\ao}(\ell_\infty(I, A_0), \ell_\infty(I, A_1)) \hookrightarrow \ell_\infty(I, H_{A}^{\ao}(A_0, A_1)).
\]
with norm less than or equal to $1$.
\end{lemma}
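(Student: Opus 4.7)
The plan is to handle the two inclusions separately, and in each case to exhibit norm-$1$ intertwining operators between the relevant couples that allow us to transport either orbit representations (for the first inclusion) or coorbit test operators (for the second). The key point is the existence of the natural coordinate embeddings $\iota_i\colon \ao \to (\ell_1(I,A_0),\ell_1(I,A_1))$ defined by $\iota_i(a) = a\,e_i$, which satisfy $\|\iota_i\|_{A_j\to \ell_1(I,A_j)}= 1$ for both $j\in\{0,1\}$, and the coordinate projections $\pi_i\colon (\ell_\infty(I,A_0),\ell_\infty(I,A_1))\to \ao$ sending $\{y_j\}_{j\in I}\mapsto y_i$, which satisfy $\|\pi_i\|_{\ell_\infty(I,A_j)\to A_j}\leq 1$ for both $j\in\{0,1\}$.

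For the first inclusion I would fix $\{x_i\}_{i\in I}\in \ell_1(I, G_A^{\ao}(\ao))$ and $\varepsilon>0$, and for each $i$ choose a representation $x_i = \sum_k S_{i,k}(a_{i,k})$ with $S_{i,k}\colon \ao\to \ao$ and $a_{i,k}\in A$ such that
\[
\sum_k \|S_{i,k}\|_{\ao\to \ao}\,\|a_{i,k}\|_A \leq \|x_i\|_{G_A^{\ao}(\ao)} + \varepsilon_i,
\]
with $\sum_i \varepsilon_i \leq \varepsilon$. Since $\iota_i\circ S_{i,k}\colon \ao\to (\ell_1(I,A_0),\ell_1(I,A_1))$ has operator norm bounded by $\|S_{i,k}\|_{\ao\to \ao}$, the double sum
\[
\{x_i\}_{i\in I}\, =\, \sum_{i\in I}\sum_k (\iota_i\circ S_{i,k})(a_{i,k})
\]
is a valid orbit representation, giving $\|\{x_i\}\|_{G_A^{\ao}(\ell_1(I,\ao))}\leq \sum_i\|x_i\|_{G_A^{\ao}(\ao)} + \varepsilon$. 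Letting $\varepsilon\to 0$ yields the desired inclusion with norm at most $1$.

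For the second inclusion I would take any $\{x_i\}\in H_A^{\ao}(\ell_\infty(I,\ao))$ together with any $i\in I$ and any $T\colon \ao\to \ao$ with $\|T\|_{\ao\to \ao}\leq 1$. Since $T\circ \pi_i\colon (\ell_\infty(I,A_0),\ell_\infty(I,A_1))\to \ao$ has norm at most $1$ and acts as $(T\circ \pi_i)\{x_j\} = T(x_i)$, the coorbit norm on $\ell_\infty(I,\ao)$ controls $\|T(x_i)\|_A$. Taking the supremum over admissible $T$ gives $\|x_i\|_{H_A^{\ao}(\ao)}\leq \|\{x_j\}\|_{H_A^{\ao}(\ell_\infty(I,\ao))}$, and then the supremum over $i\in I$ produces the claimed norm-$1$ inclusion.

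There is no serious obstacle here: the whole argument unfolds directly from the orbit/coorbit definitions once the canonical maps $\iota_i$ and $\pi_i$ are identified. The only care needed is to verify that these intertwiners have operator norm exactly $1$ (not merely bounded) between the relevant couples, which is immediate from the definitions of the $\ell_1$- and $\ell_\infty$-valued couples, and to ensure that the chosen representations of each $x_i$ can be aggregated without losing the constant, which is the role of the $\varepsilon_i$ with $\sum_i\varepsilon_i\leq \varepsilon$ in the first part.
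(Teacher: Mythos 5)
Your argument is correct. Note that the paper does not prove this lemma at all: it is quoted verbatim from Cobos--Peetre \cite[Lemmas 2.1 and 3.1]{CP}, and your proof via the norm-one coordinate embeddings $\iota_i$ (to transport orbit representations) and coordinate projections $\pi_i$ (to transport coorbit test operators) is exactly the standard argument behind those lemmas; the only cosmetic point worth making explicit is that for $\{x_i\}\in\ell_1(I,\cdot)$ only countably many $x_i$ are nonzero, so the summable family $\{\varepsilon_i\}$ need only be chosen over that countable support.
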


Following \cite{Ov}, the function $\varphi_{\mathcal{F}}$, which corresponds to an exact
interpolation functor $\mathcal{F}$ by the equality
\[
\mathcal{F}(s\mathbb{R}, t\mathbb{R}) = \varphi_{\mathcal{F}}(s,t)\mathbb{R},
\]
is called the \emph{characteristic function} of the functor $\mathcal{F}$. Here $\alpha \mathbb{R}$ denotes $\mathbb{R}$
equipped with the norm $\|\cdot\|_{\alpha \mathbb{R}} = \alpha |\cdot|$ for $\alpha >0$. We notice that $\varphi\in\Phi$.

\vspace{2 mm}

We omit the simple proof of the following technical fact.

\begin{proposition}
\label{fundorbit}
Let $A$ be an intermediate Banach space with respect to a couple $\ao$ of Banach spaces. Then the characteristic
function $\varphi_G$ of an exact interpolation functor $G:= G_{A}^{\ao}$ is given by
\[
\varphi_G(s, t) = (\psi_A)^{*}(s, t), \quad\, s, t>0.
\]
\end{proposition}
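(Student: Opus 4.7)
The plan is to compute directly the norm of $1 \in \mathbb{R}$ in the one-dimensional Banach space $G(s\mathbb{R}, t\mathbb{R}) = Orb_A^{\ao}(s\mathbb{R}, t\mathbb{R})$, since by the very definition of the characteristic function this norm equals $\varphi_G(s,t)$. The entire argument reduces to understanding the indexing set $\Gamma = B_{L(\ao,(s\mathbb{R}, t\mathbb{R}))}$ appearing in the orbit construction. Because the norm on $\alpha\mathbb{R}$ is $\alpha|\cdot|$, a linear functional $S\colon A_0+A_1 \to \mathbb{R}$ satisfies $\|S\|_{\ao\to (s\mathbb{R},t\mathbb{R})} \leq 1$ if and only if $|S(a)|\leq (1/s)\|a\|_{A_0}$ for $a\in A_0$ and $|S(a)|\leq (1/t)\|a\|_{A_1}$ for $a\in A_1$. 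Splitting $a = a_0+a_1$ and taking the infimum over decompositions converts this pair of inequalities into the single equivalent bound $|S(a)|\leq K(1/s, 1/t, a;\ao)$ valid for every $a \in A_0+A_1$, and combined with the defining property of $\psi_A$ this yields $|S(a)| \leq \psi_A(1/s, 1/t)\|a\|_A$ for $a \in A$.

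The lower bound is then immediate: for any admissible representation $1 = \pi_{\ao}\{a_S\} = \sum_{S \in \Gamma} S(a_S)$ with $\{a_S\}\in\ell_1(\Gamma, A)$,
\[
1 \leq \sum_{S\in\Gamma} |S(a_S)| \leq \psi_A(1/s, 1/t)\sum_{S\in\Gamma}\|a_S\|_A,
\]
so passing to the infimum over representations gives $\varphi_G(s,t) = \|1\|_{G(s\mathbb{R},t\mathbb{R})} \geq 1/\psi_A(1/s, 1/t) = (\psi_A)^{*}(s,t)$.

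For the matching upper bound I would manufacture, for each $\varepsilon > 0$, a single-term representation. Choose $a\in A$ with $\|a\|_A = 1$ and $K(1/s, 1/t, a; \ao) > \psi_A(1/s, 1/t) - \varepsilon$. Since $K(1/s, 1/t, \cdot; \ao)$ is a seminorm on $A_0+A_1$, the Hahn--Banach theorem extends the linear functional $\lambda a\mapsto \lambda K(1/s, 1/t, a;\ao)$ defined on the line $\mathbb{R}a$ to a functional $S$ on $A_0+A_1$ dominated in absolute value by $K(1/s, 1/t, \cdot;\ao)$; by the first paragraph, $S\in\Gamma$. The one-term representation $1 = S\bigl(a/K(1/s, 1/t, a;\ao)\bigr)$ then forces $\varphi_G(s,t) \leq 1/K(1/s, 1/t, a;\ao) < 1/(\psi_A(1/s, 1/t) - \varepsilon)$, and sending $\varepsilon \to 0$ concludes the argument. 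The only non-mechanical ingredient is the Hahn--Banach extension, which is standard once one verifies that $K(1/s, 1/t, \cdot; \ao)$ is subadditive and absolutely homogeneous; everything else is bookkeeping around the definitions.
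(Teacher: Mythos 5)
Your argument is correct: identifying the index set $B_{L(\vec{A},(s\mathbb{R},t\mathbb{R}))}$ with the linear functionals dominated by $K(1/s,1/t,\cdot\,;\vec{A})$ yields the lower bound $\varphi_G(s,t)\geq (\psi_A)^{*}(s,t)$, and the Hahn--Banach extension of $\lambda a\mapsto \lambda K(1/s,1/t,a;\vec{A})$ produces the one-term representation giving the matching upper bound. The paper omits the proof of this proposition as a simple technical fact, and your computation is precisely the standard argument one would supply, so there is nothing to compare beyond noting agreement.
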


\vspace{2 mm}

We will need also the following lemma.

\begin{lemma}
\label{functionest}
If $\varphi_\mathcal{F}$ is a~characteristic function of an exact interpolation functor $\mathcal{F}$, then
\[
(\varphi_{\mathcal{F}})^{*}(s, t) = \sup_{\ao \in \vec{\mathcal{B}}} \psi_{\mathcal{F}(\ao)}(s, t), \quad\, s, t>0,
\]
where $\vec{\mathcal{B}}$ denotes the class of all Banach couples.
\end{lemma}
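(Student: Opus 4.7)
The plan is to establish the claimed equality by proving each inequality separately, exploiting the defining identity $\mathcal{F}(\alpha\R, \beta\R) = \varphi_{\mathcal{F}}(\alpha, \beta)\R$ together with exactness of $\mathcal{F}$ and a Hahn--Banach duality for the $K$-functional.

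For the inequality $\sup_{\ao} \psi_{\mathcal{F}(\ao)}(s, t) \geq \varphi_{\mathcal{F}}^{*}(s, t)$, I would exhibit a single one-dimensional couple that already achieves the right-hand value. Take $\ao_0 := (\R, (s/t)\R)$. A one-line computation gives $K(s, t, x; \ao_0) = \inf\{s|x_0| + t(s/t)|x_1| : x = x_0 + x_1\} = s|x|$ for every $x \in \R$, while the defining property of the characteristic function forces $\mathcal{F}(\ao_0) = \varphi_{\mathcal{F}}(1, s/t)\R$. Choosing $x_0$ with $\|x_0\|_{\mathcal{F}(\ao_0)} = 1$ yields $K(s, t, x_0; \ao_0) = s/\varphi_{\mathcal{F}}(1, s/t)$, and positive $1$-homogeneity of $\varphi_{\mathcal{F}}$ rewrites this as $\varphi_{\mathcal{F}}^{*}(s, t) = 1/\varphi_{\mathcal{F}}(1/s, 1/t)$.

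For the reverse inequality I would fix an arbitrary Banach couple $\ao$ and $x \in \mathcal{F}(\ao)$, and show $K(s, t, x; \ao) \leq \varphi_{\mathcal{F}}^{*}(s, t)\|x\|_{\mathcal{F}(\ao)}$. Since $K(s, t, \cdot; \ao)$ is a norm on $A_0 + A_1$, the Hahn--Banach theorem produces $f \in (A_0 + A_1)^{*}$ with $|f(x)| = K(s, t, x; \ao)$ and $|f(y)| \leq K(s, t, y; \ao)$ for every $y$. Restricting $f$ to each $A_j$ and using the trivial bounds $K(s, t, a; \ao) \leq s\|a\|_{A_0}$ and $K(s, t, a; \ao) \leq t\|a\|_{A_1}$ gives $\|f|_{A_0}\| \leq s$ and $\|f|_{A_1}\| \leq t$. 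Viewing $f$ as an operator $T_f \colon \ao \to ((1/s)\R, (1/t)\R)$, these bounds translate precisely to $\|T_f\|_{\ao \to ((1/s)\R, (1/t)\R)} \leq 1$. Exactness of $\mathcal{F}$ then gives
\[
\varphi_{\mathcal{F}}(1/s, 1/t)|f(x)| = \|T_f(x)\|_{\mathcal{F}((1/s)\R, (1/t)\R)} \leq \|x\|_{\mathcal{F}(\ao)},
\]
so $K(s, t, x; \ao) = |f(x)| \leq \varphi_{\mathcal{F}}^{*}(s, t)\|x\|_{\mathcal{F}(\ao)}$. Taking the supremum over $\|x\|_{\mathcal{F}(\ao)} \leq 1$ and then over $\ao$ yields the complementary inequality.

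The main subtlety is the Hahn--Banach step, which supplies the dual description of the $K$-functional via functionals with controlled component norms; once this bridge is in place, the rest of the argument is a mechanical transfer between such functionals and morphisms into the one-dimensional couples $(\alpha\R, \beta\R)$, which is precisely the regime in which $\varphi_{\mathcal{F}}$ is defined.
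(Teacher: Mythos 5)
Your proof is correct, and it is worth separating the two halves when comparing it with the paper's argument. For the lower bound $\sup_{\vec{A}}\psi_{\mathcal{F}(\vec{A})}(s,t)\geq(\varphi_{\mathcal{F}})^{*}(s,t)$ you do essentially what the paper does --- test on one-dimensional couples $(u\mathbb{R},v\mathbb{R})$ --- except that you choose the optimal weights $u=1$, $v=s/t$ outright rather than computing $\sup_{u,v>0}\min\{su,tv\}/\varphi_{\mathcal{F}}(u,v)$; the two computations coincide. For the upper bound the routes genuinely diverge: the paper invokes the minimality of the orbit functor $G=Orb_{\mathcal{F}(\vec{A})}^{\vec{A}}$ together with the identity $\varphi_{G}=(\psi_{\mathcal{F}(\vec{A})})^{*}$ of Proposition \ref{fundorbit} (whose proof is omitted there), and then passes to $(\cdot)^{*}$; you instead prove $K(s,t,x;\vec{A})\leq(\varphi_{\mathcal{F}})^{*}(s,t)\,\|x\|_{\mathcal{F}(\vec{A})}$ directly, by observing that a Hahn--Banach norming functional for the norm $K(s,t,\cdot\,;\vec{A})$ automatically has restrictions of norm at most $s$ on $A_{0}$ and at most $t$ on $A_{1}$, hence is a contractive morphism $\vec{A}\to((1/s)\mathbb{R},(1/t)\mathbb{R})$, to which exactness and the defining identity $\mathcal{F}(\alpha\mathbb{R},\beta\mathbb{R})=\varphi_{\mathcal{F}}(\alpha,\beta)\mathbb{R}$ apply. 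Your version is more self-contained: it uses only exactness and rank-one morphisms into one-dimensional couples, and in effect it also supplies the duality fact underlying Proposition \ref{fundorbit} that the paper's proof silently relies on. The paper's version is shorter given the Aronszajn--Gagliardo machinery already in place. Both arguments are sound.
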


\begin{proof}
Let us fix a~Banach couple $\ao$. Then from the minimality property of an orbit functor $G:=G_{A}^{\ao}$
with $A := \mathcal{F}(\ao)$, it follows that, for any Banach couple $\xo$,
\[
G(\xo) \hookrightarrow \mathcal{F}(\xo)
\]
with the norm of the continuous inclusion map less than or equal to $1$. In particular this implies that
$\varphi_{G} \geq \varphi_{\mathcal{F}}$, and so,
\[
(\varphi_G)^{*}(s, t) \leq (\varphi_{\mathcal{F}})^{*}(s, t), \quad\, s, t>0.
\]
Since the characteristic function $\varphi_G$ of the functor $G$ satisfies $\varphi_G(s, t) = (\psi_A)^{*}(s, t)$
for all $s, t>0$, we conclude that
\[
\sup_{\ao \in \vec{\mathcal{B}}}\psi_{\mathcal{F}(\ao)}(s, t) \leq (\varphi_{\mathcal{F}})^{*}(s, t), \quad\, s, t>0.
\]
A~direct computation shows that, for fixed $u, v>0$ and all $\alpha \in \mathbb{R}$,
\[
K(s, t, \alpha; (u\mathbb{R}, v\mathbb{R})) = \min\{su, tv\}|\alpha|, \quad\, s, t>0.
\]
Hence, for all $s, t>0$, we get
\begin{align*}
\sup_{\ao \in \vec{\mathcal{B}}}\psi_{\mathcal{F}(\ao)}(s, t) & \geq \sup_{u, v>0}
\psi_{\mathcal{F}(u\mathbb{R}, v\mathbb{R})}(s, t) \\
& = \sup_{u, v>0} \, \sup_{\alpha\neq 0} \frac{K(s, t, \alpha;
(u\mathbb{R}, v\mathbb{R}))}{\|\alpha\|_{\mathcal{F}(u\mathbb{R}, v\mathbb{R})}}\\
&= \sup_{u, v>0}\frac{\min\{su, tv\}}{\varphi_\mathcal{F}(s, t)} = (\varphi_F)^{*}(s, t).
\end{align*}
This completes the proof.
\end{proof}

\begin{corollary}
\label{Xestimate}
Let $\ao = (A_0, A_1)$ be a fixed Banach couple and let $A$ be a~fixed intermediate space with respect to $\ao$. Then, for any
Banach couple $\xo$, the fundamental function $\psi_X$ of $X:=Orb_{A}^{\ao}(\xo)$ satisfies the estimate
\[
\psi_X(s, t) \leq \psi_A(s, t), \quad\, s, t >0.
\]
In particular $\psi_A \in \Phi_0$ implies $\psi_X \in \Phi_0$.
\end{corollary}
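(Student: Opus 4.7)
The plan is to unpack the definitions and push a given orbit-representation of $x \in X$ through the $K$-functional, exploiting the fact that every operator that occurs in the representation has norm at most $1$ as a morphism $\ao \to \xo$.

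First I would recall that $\psi_X(s,t) = \sup\{K(s,t,x;\xo) : \|x\|_X = 1\}$, so it suffices to prove the pointwise estimate
\[
K(s,t,x;\xo) \leq \psi_A(s,t)\,\|x\|_{Orb_A^{\ao}(\xo)}, \quad\, x \in Orb_A^{\ao}(\xo),\,\,s,t>0.
\]
For this, the key observation is the following monotonicity of the $K$-functional under couple morphisms: if $S \in L(\ao,\xo)$ and $a \in A_0 + A_1$, then for any decomposition $a = a_0 + a_1$ with $a_j \in A_j$ one has $Sa = Sa_0 + Sa_1$ with $\|Sa_j\|_{X_j} \leq \|S\|_{\ao\to\xo}\|a_j\|_{A_j}$, so taking infimum yields $K(s,t,Sa;\xo) \leq \|S\|_{\ao\to\xo}\,K(s,t,a;\ao)$.

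Now fix $x \in X := Orb_A^{\ao}(\xo)$ with $\|x\|_X \leq 1$ and $\varepsilon > 0$. Choose a representation $x = \sum_{S\in\Gamma} S(a_S)$ with $S$ ranging over $\Gamma = B_{L(\ao,\xo)}$ and $\sum_S \|a_S\|_A \leq 1 + \varepsilon$. Since the $K$-functional is subadditive and each $\|S\|_{\ao\to\xo} \leq 1$, the above monotonicity gives
\[
K(s,t,x;\xo) \leq \sum_{S\in\Gamma} K(s,t,S(a_S);\xo) \leq \sum_{S\in\Gamma} K(s,t,a_S;\ao).
\]
Using the defining inequality $K(s,t,a;\ao) \leq \psi_A(s,t)\|a\|_A$ inside the sum yields $K(s,t,x;\xo) \leq \psi_A(s,t)(1+\varepsilon)$; letting $\varepsilon \to 0$ and taking the supremum over $x$ in the unit sphere of $X$ produces $\psi_X(s,t) \leq \psi_A(s,t)$.

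For the final assertion, note that $\psi_X$ belongs to $\Phi$ by general properties of the $K$-functional (positive homogeneity and monotonicity in $(s,t)$), and the pointwise domination $0 \leq \psi_X(s,t) \leq \psi_A(s,t)$ transfers the vanishing limits $\lim_{s\to 0}\psi_A(s,1)=\lim_{s\to 0}\psi_A(1,s)=0$ to $\psi_X$, placing $\psi_X$ in $\Phi_0$. I do not expect a genuine obstacle here; the one step that must be handled with some care is the convergence of the orbit series in $A_0 + A_1$ (so that $K$ may be applied termwise via subadditivity), but this is immediate from $\{a_S\} \in \ell_1(\Gamma, A)$ and the continuous inclusion $A \hookrightarrow A_0 + A_1$.
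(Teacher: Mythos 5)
Your proof is correct, but it takes a genuinely different route from the paper. The paper deduces the corollary from its general machinery on characteristic functions: Proposition \ref{fundorbit} identifies the characteristic function of the orbit functor $G:=Orb_A^{\ao}$ as $\varphi_G=(\psi_A)^{*}$, and Lemma \ref{functionest} gives $\sup_{\yo}\psi_{G(\yo)}=(\varphi_G)^{*}=\psi_A$, from which the estimate is immediate. You instead argue directly from the definition of the orbit norm: for a near-optimal representation $x=\sum_S S(a_S)$ you combine the countable subadditivity of the (continuous, equivalent) norm $K(s,t,\cdot\,;\xo)$ on $X_0+X_1$ with the morphism estimate $K(s,t,Sa;\xo)\leq\|S\|_{\ao\to\xo}K(s,t,a;\ao)$ and the pointwise bound $K(s,t,a;\ao)\leq\psi_A(s,t)\|a\|_A$. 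This is essentially a self-contained re-derivation of the content of Proposition \ref{fundorbit} (whose proof the paper omits), specialized to what the corollary needs; it is more elementary and avoids the $*$-duality of characteristic functions and the minimality argument behind Lemma \ref{functionest}, at the cost of not reusing machinery the paper needs anyway. All the steps you flag as delicate (absolute convergence of the orbit series in $X_0+X_1$, termwise application of $K$, and the transfer of the $\Phi_0$ condition by pointwise domination) are handled correctly.
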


\begin{proof}
As mentioned in the proof of Lemma \ref{functionest}, the fundamental function of the functor $\mathcal{F}:=Orb_{A}^{\ao}$
satisfies $(\varphi_F)^{*} = \psi_{A}.$ Applying Lemma \ref{functionest} to the functor $\mathcal{F}$, the required estimate
follows (by $X=\mathcal{F}(\xo)$)
\[
\psi_X(s, t) = \sup\{K(s, t, x; \xo);\, \|x\|_X = 1\} \leq (\varphi_{\mathcal{F}})^{*}(s, t) = \psi_A(s, t), \quad\, s, t>0.
\]
\end{proof}

We now state and prove the following key theorem, which will be used repeatedly.

\begin{theorem}
\label{orbcorb}
Suppose that Banach spaces $A$, $B$, $C$ and  Banach couples $\ao = (A_0, A_1)$, $\bo=(B_0, B_1)$, $\co= (C_0, C_1)$
satisfy the conditions of Theorem $\ref{core}$. Let $(X_0, X_1)$, $(Y_0, Y_1)$, $(Z_0, Z_1)$ be any Banach couples and let
$T\colon (X_0, X_1) \times (Y_0, Y_1) \to (Z_0, Z_1)$ be a~bilinear operator, such that the restriction
$T\colon X_0 \times Y_0 \to Z_0$ is compact. Then
\[
T\colon Orb_{A}^{\ao}(X_0, X_1) \times Orb_{B}^{\bo}(Y_0, Y_1) \to Corb_{C}^{\co}(Z_0, Z_1)
\]
is also a~compact operator.
\end{theorem}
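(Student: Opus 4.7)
The plan is to reduce the assertion to Theorem \ref{core} applied to a lifted bilinear operator on auxiliary $\ell_1$- and $\ell_\infty$-valued couples, and then transfer compactness back using the surjection-embedding proposition at the beginning of Section~2. Set $\Gamma = B_{L(\ao, \xo)}$, $\Lambda = B_{L(\bo, \yo)}$ and $J = B_{L(\zo, \co)}$, and form the auxiliary Banach couples
\[
\widetilde{\ao} := (\ell_1(\Gamma, A_0), \ell_1(\Gamma, A_1)),\quad \widetilde{\bo} := (\ell_1(\Lambda, B_0), \ell_1(\Lambda, B_1)),\quad \widetilde{\co} := (\ell_\infty(J, C_0), \ell_\infty(J, C_1)).
\]
Since $\ao, \bo, \co$ satisfy $(Ap)$, the coordinatewise action of their approximating sequences shows that $\widetilde{\ao}, \widetilde{\bo}, \widetilde{\co}$ satisfy $(Ap)$ with the same constants.

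Next, I would consider the lifted bilinear operator $\widetilde{T} := i_{\co} \circ T \circ (\pi_{\ao}, \pi_{\bo})$. Because every $S \in \Gamma$ has norm at most $1$ as an operator $\ao \to \xo$, the map $\pi_{\ao}$ restricts to a bounded operator $\ell_1(\Gamma, A_j) \to X_j$ for $j=0,1$, and analogous statements hold for $\pi_{\bo}$ and $i_{\co}$. Hence $\widetilde{T}$ is endpoint-to-endpoint bounded on $(\widetilde{\ao},\widetilde{\bo},\widetilde{\co})$, and the compactness of $T\colon X_0 \times Y_0 \to Z_0$ combined with these bounded factors yields compactness of the restriction $\widetilde{T}\colon \ell_1(\Gamma, A_0) \times \ell_1(\Lambda, B_0) \to \ell_\infty(J, C_0)$.

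The central step is to apply Theorem \ref{core} to $\widetilde{T}$ on the couples $\widetilde{\ao},\widetilde{\bo},\widetilde{\co}$ with intermediate spaces $\ell_1(\Gamma, A), \ell_1(\Lambda, B), \ell_\infty(J, C)$. A direct coordinatewise estimate gives $\psi_{\ell_1(\Gamma, A)}(s, 1; \widetilde{\ao}) \leq \psi_A(s, 1)$ and $\phi_{\ell_\infty(J, C)}(s, 1) \leq \phi_C(s, 1)$, so both are in $\Phi_0$. To verify the two $\varphi$-bilinear interpolation conditions required by Theorem \ref{core}, I would combine Theorem \ref{lift} applied with the couples $\widetilde{\ao},\widetilde{\bo},\widetilde{\co}$ in place of arbitrary ones, with the sandwich inclusions of Lemma \ref{inclusions}, namely $\ell_1(\Gamma, A) \hookrightarrow G_A^{\ao}(\widetilde{\ao})$ and $H_C^{\co}(\widetilde{\co}) \hookrightarrow \ell_\infty(J, C)$. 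This forces any bilinear $S\colon \widetilde{\ao}\times\widetilde{\bo}\to\widetilde{\co}$ to restrict with the $\varphi$-norm bound to $\ell_1(\Gamma, A)\times\ell_1(\Lambda, B)\to\ell_\infty(J, C)$. Theorem \ref{core} then yields compactness of $\widetilde{T}\colon \ell_1(\Gamma, A) \times \ell_1(\Lambda, B) \to \ell_\infty(J, C)$.

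Finally, by definition of the orbit norm, $\pi_{\ao}$ and $\pi_{\bo}$ are quotient surjections from $\ell_1(\Gamma, A)$ and $\ell_1(\Lambda, B)$ onto $Orb_A^{\ao}(\xo)$ and $Orb_B^{\bo}(\yo)$, while by definition of the coorbit norm $i_{\co}\colon Corb_C^{\co}(\zo) \to \ell_\infty(J, C)$ is an isometric embedding. Since Theorem \ref{lift} already ensures that $T$ maps into $Corb_C^{\co}(\zo)$, the surjection-embedding proposition from Section~2 transfers compactness of $\widetilde{T}$ back to compactness of $T\colon Orb_A^{\ao}(\xo) \times Orb_B^{\bo}(\yo) \to Corb_C^{\co}(\zo)$. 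The main obstacle I foresee is verifying the $\varphi$-bilinear interpolation condition for the $\circ$-versions of the auxiliary couples, which requires identifying $\ell_1(\Gamma, A_j)^\circ$ with $\ell_1(\Gamma, A_j^\circ)$ and carefully invoking the hypothesis $(A,B;C)\in\mathcal{B}_\varphi(\ao^\circ, \bo^\circ;\co)$ through Theorem \ref{lift}; the remaining ingredients are essentially bookkeeping.
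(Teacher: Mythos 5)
Your proposal is correct and follows essentially the same route as the paper: lift $T$ to $\widetilde{T}=i_{\co}T(\pi_{\ao},\pi_{\bo})$ acting on the $\ell_1$- and $\ell_\infty$-valued auxiliary couples, use the inheritance of $(Ap)$, the bilinear orbit/coorbit interpolation theorem and Lemma \ref{inclusions} to satisfy the hypotheses of Theorem \ref{core}, and transfer compactness back through the quotient maps and the isometric embedding via the surjection--embedding proposition of Section~2. The only (immaterial) difference is one of ordering: you sandwich with Lemma \ref{inclusions} \emph{before} applying Theorem \ref{core}, so that the core theorem is applied directly to $\ell_1(\Gamma,A)\times\ell_1(\Lambda,B)\to\ell_\infty(J,C)$, whereas the paper applies Theorem \ref{core} to the orbit and coorbit spaces of the auxiliary couples and only then uses the inclusions.
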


\begin{proof}
For simplicity, we denote by $I_0$, $I_1$ and $I$ the balls of the Banach spaces $L(\ao, \xo)$, $L(\bo, \yo)$
and $L(\co, \zo)$, respectively. From Proposition \ref{lift} combined with the definitions of minimal and maximal
interpolation functors, it follows that the operator
\[
T\colon G_{A}^{\ao}(X_0, X_1) \times  G_{B}^{\bo}(Y_0, Y_1) \to H_{C}^{\co}(Z_0, Z_1)
\]
is compact, if and only if, the operator $\widetilde{T}$ given by the formula
\[
\widetilde{T} := i_{\co}T(\pi_{\xo}, \pi_{\yo}) \colon \ell_1(I_0, A) \times \ell_1(I_1, B) \to \ell_{\infty}(I, C)
\]
is compact. We thus have the following factorizations for the restrictions
\[
\widetilde{T}\colon \ell_1(I_0, A_0) \times \ell_1(I_1, B_0) \stackrel{\pi_{\ao}} \longrightarrow X_0 \times Y_0
\stackrel{T} \longrightarrow Z_0 \stackrel{i_{\co}} \longrightarrow \ell_{\infty}(I, C_0),
\]
\[
\widetilde{T}\colon \ell_1(I_0, A_1) \times \ell_1(I_1, B_1) \stackrel{\pi_{\ao}} \longrightarrow X_1 \times Y_1
\stackrel{T} \longrightarrow Z_1 \stackrel{i_{\co}} \longrightarrow \ell_{\infty}(I, C_1).
\]
Applying Lemma \ref{lift}, we conclude that
\[
\widetilde{T}\colon G_{A}^{\ao}(\ell_1(I_0, A_0), \ell_1(I_0, A_1))
\times G_{B}^{\bo}(\ell_1(I_1, B_0),  \ell_1(I_1, B_1)) \to H_{C}^{\co}(\ell_\infty(I, C_0), \ell_\infty(I, C_1)).
\]
Since the couples $(A_0, A_1)$,  $(B_0, B_1)$ and $(C_0, C_1)$ have the approximation property $(Ap)$, then
the couples $(\ell_1(I_0, A_0), \ell_1(I_0, A_1))$, $(\ell_1(I_1, B_0),  \ell_1(I_1, B_1))$ and $(\ell_1(I, C_0),
\ell_1(I, C_1))$  inherit the same property. Further, our hypothesis $A_0=A_0^{\circ}$ and $B_0=B_0^{\circ}$
implies that, for couples $(\ell_1(I_0, A_0), \ell_1(I_0, A_1))$, $(\ell_1(I_1, B_0),  \ell_1(I_1, B_1))$, we have
\[
\ell_1(I_0, A_0)= \ell_1(I_0, A_0)^{\circ},\quad\, \ell_1(I_1, B_0)=\ell_1(I_1, B_0)^{\circ}.
\]
Combining the above facts, we conclude from Theorem \ref{core} that
\[
\widetilde{T}\colon G_{A}^{\ao}(\ell_1(I_0, A_0), \ell_1(I_0, A_1))
\times G_{B}^{\bo}(\ell_1(I_1, B_0),  \ell_1(I_1, B_1)) \to H_{C}^{\co}(\ell_\infty(I, C_0), \ell_\infty(I, C_1))
\]
is a~compact operator. Consequently, it follows, from Lemma \ref{inclusions},
\[
\widetilde{T}\colon \ell_1(I_0, G_{A}^{\ao}(A_0, A_1))\times  \ell_1(I_1, G_{A}^{\ao}(A_0, A_1))
\to \ell_\infty(I, H_{C}^{\co}(C_0, C_1))
\]
is compact. Combining this fact with obvious continuous inclusions
\[
\ell_1(I_0, A) \hookrightarrow G_{A}^{\ao}(A_0, A_1), \quad\, \ell_1(I_1, B) \hookrightarrow G_{B}^{\ao}(B_0, B_1), \quad\,
\ell_\infty(I, H_{C}^{\co}(C_0, C_1)) \hookrightarrow \ell_\infty(I, C)
\]
we conclude that
\[
\widetilde{T}\colon \ell_1(I_0, A) \times \ell_1(I_1, B) \to \ell_\infty(I, C)
\]
is a compact operator as required. The proof is complete.
\end{proof}

We conclude this section by specializing Theorem \ref{orbcorb} to specific couples $\ao$, $\bo$ and $\co$ satisfying the
approximation property $(Ap)$ and triples $(A, B; C) \in \mathcal{B}_{\varphi}(\ao, \bo\; \co)$ to get new results on
interpolation of bilinear compact operators. We start with applications which involve the Calder\'on complex method of
interpolation $[\,\cdot\,]_{\theta}$ with $\theta \in (0, 1)$. Information on this method is found in \cite{Cal, BL}.

\begin{theorem}
\label{complex}
Suppose that Banach couples $\ao$, $\bo$ and $\co$ satisfy the approximation property $(Ap)$. Then for any Banach
couples $(X_0, X_1)$, $(Y_0, Y_1)$, $(Z_0, Z_1)$ and any bilinear operator $T\colon (X_0, X_1) \times (Y_0, Y_1)
\to (Z_0, Z_1)$ such that $T\colon X_0 \times Y_0 \to Z_0$ is compact, we have
\[
T\colon Orb_{[\ao]_{\theta}}^{\ao}(X_0, X_1) \times  Orb_{[\bo]_{\theta}}^{\bo}(Y_0, Y_1)
\to Corb_{[\co]_{\theta}}^{\co}(Z_0, Z_1)
\]
is a compact bilinear operator for every $\theta \in (0, 1)$.
\end{theorem}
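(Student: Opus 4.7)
The plan is to reduce this to a direct application of Theorem~\ref{orbcorb} by verifying its hypotheses with the choices $A=[\ao]_{\theta}$, $B=[\bo]_{\theta}$, $C=[\co]_{\theta}$ and $\varphi(s,t)=s^{1-\theta}t^{\theta}$. The approximation property $(Ap)$ for $\ao$, $\bo$, $\co$ is granted, so the work reduces to the two conditions in Theorem~\ref{core}: the asymptotic behaviour of the associated fundamental functions, and the bilinear estimates with a $\varphi\in\Phi_{0}$, both over the original couples and over their regular parts.

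First I would record the membership $\varphi\in\Phi_{0}$: since $\theta\in(0,1)$, both $\varphi(s,1)=s^{1-\theta}$ and $\varphi(1,s)=s^{\theta}$ tend to zero as $s\to 0$. Next I would use the classical K-functional estimate for the complex method, $K(1,t,x;\ao)\leq C\,t^{\theta}\|x\|_{[\ao]_{\theta}}$, to obtain $\psi_{[\ao]_{\theta}}(1,t)\lesssim t^{\theta}$; positive homogeneity of $\psi$ then yields
\[
\psi_{A}(s,1)=s\,\psi_{A}(1,1/s)\lesssim s^{1-\theta}\to 0
\]
as $s\to 0$, and the analogous bound holds for $\psi_{B}$. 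For $\phi_{C}$ one uses that the fundamental function of the complex method space is $\phi_{[\co]_{\theta}}(s,t)=s^{1-\theta}t^{\theta}$, whence $\phi_{C}(s,1)=s^{1-\theta}\to 0$.

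The bilinear estimates are handled through Calder\'on's bilinear interpolation theorem \cite[4.4]{Cal}: for any $T\colon\ao\times\bo\to\co$,
\[
\|T\|_{[\ao]_{\theta}\times[\bo]_{\theta}\to[\co]_{\theta}}
\leq \|T\|_{A_{0}\times B_{0}\to C_{0}}^{1-\theta}\,\|T\|_{A_{1}\times B_{1}\to C_{1}}^{\theta},
\]
which is exactly $(A,B;C)\in\mathcal{B}_{\varphi}(\ao,\bo;\co)$. Applying the same theorem to the regular couples $\ao^{\circ}$, $\bo^{\circ}$ (and using $T\in\mathcal{L}_{2}(A_{j}^{\circ}\times B_{j}^{\circ},C_{j})$ with norms bounded by the original ones) delivers $T\colon[\ao^{\circ}]_{\theta}\times[\bo^{\circ}]_{\theta}\to[\co]_{\theta}$ with the same $\varphi$-estimate. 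Combined with the classical identification $[\ao^{\circ}]_{\theta}=[\ao]_{\theta}$ and $[\bo^{\circ}]_{\theta}=[\bo]_{\theta}$ as Banach spaces, this gives $(A,B;C)\in\mathcal{B}_{\varphi}(\ao^{\circ},\bo^{\circ};\co)$.

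With all hypotheses of Theorem~\ref{orbcorb} verified, the conclusion is immediate. The main obstacle I anticipate is not the analytic content but the bookkeeping around the regular parts: one must cite the correct form of the Calder\'on bilinear estimate on $(\ao^{\circ},\bo^{\circ})$ and ensure that the complex space built from the regularised couple coincides with the one built from the original couple, so that the second membership in $\mathcal{B}_{\varphi}$ really refers to the same $A$, $B$, $C$ used in the first. Once this identification is in place, the rest is a routine invocation of classical facts.
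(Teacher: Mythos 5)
Your proposal is correct and follows essentially the same route as the paper: reduce to Theorem~\ref{orbcorb} by combining the embedding $[\ao]_{\theta}\hookrightarrow(\ao)_{\theta,\infty}$ (your $K$-functional bound is the same fact) to control the fundamental functions, Calder\'on's bilinear interpolation theorem to get membership in $\mathcal{B}_{\varphi}$ with $\varphi(s,t)=s^{1-\theta}t^{\theta}\in\Phi_{0}$, and the isometric identity $[\xo]_{\theta}=[\xo^{\circ}]_{\theta}$ to handle the regularised couples. Your explicit verification that $\phi_{[\co]_{\theta}}(s,1)\to 0$ is a small point the paper leaves implicit, but it is not a different argument.
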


\begin{proof}
We apply Theorem \ref{orbcorb}. Observe that for any Banach couple $\ao = (A_0, A_1)$,  we have
$[A_0, A_1]_{\theta} \hookrightarrow (A_0, A_1)_{\theta, \infty}$ for all $\theta \in (0, 1)$
with norm less than or equal to $1$ (see \cite[Theorem 4.7.1]{BL}). This implies that the fundamental
function $\psi_{[\ao]_{\theta}}$ of the space $[\ao]_{\theta}$ satisfies the estimate
\[
\psi_{[\ao]_{\theta}}(s, t) \leq s^{1-\theta}t^{\theta}, \quad\, s, t>0
\]
and so $\psi_{[\ao]_{\theta}} \in \Phi_0$ for all $\theta \in (0, 1)$.

According to multilinear theorem by  Calder\'on (see \cite{Cal} or \cite[Theorem 4.4.1]{BL}), it follows
that, for any bilinear operator $S\colon \ao \times \bo \to \co$, we have
$S\colon [\ao]_{\theta} \times [\bo]_{\theta} \to [\co]_{\theta}$ with
\[
\|S\|_{[\ao]_{\theta} \times [\bo]_{\theta}}
\leq (\|S\|_{A_0 \times B_0 \to C_0})^{1-\theta}(\|S\|_{A_1 \times B_1 \to C_1})^{\theta}.
\]
This implies that
\[
\big([\ao]_{\theta}, [\bo]_{\theta}; [\co]_{\theta}\big) \in \mathcal{B}_{\varphi}(\ao, \bo; \co),
\]
where $\varphi(s, t) = s^{1-\theta} t^{\theta}$ for all $s, t>0.$

Combining the above facts with the well known isometrical formula true for any Banach couple $(X_0, X_1)$,
\[
[X_0, X_1]_{\theta} = [X_0^{\circ}, X_1^{\circ}]_{\theta},
\]
we see that the required result follows from Theorem \ref{orbcorb}.
\end{proof}

Before proceeding applications for bilinear operators on the product of interpolation spaces generated
by Peetre's method $\langle\,\cdot \,\rangle_{\theta}$, we recall that, for any Banach couple $(X_0, X_1)$
and every $\theta \in (0, 1)$, the space $\langle X_0, X_1 \rangle_{\theta}$ is defined as the set of
all elements $x\in X_0 + X_1$ which are represented in the form $x= \sum_{k\in \mathbb{Z}} x_k$
(convergence in $X_0 + X_1$), where the elements $x_k \in X_0 \cap X_1$ are such
that $\sum_{k\in \mathbb{Z}} 2^{-\theta k} x_{k}$ is unconditionally convergent in $X_0$, and
$\sum_{k\in \mathbb{Z}} 2^{(1-\theta) k}x_k$ is unconditionally convergent in $X_1$.
$\langle X_0, X_1 \rangle_{\theta}$ is a~Banach space equipped with the norm
\[
\|x\|_{\langle X_0, X_1\rangle_{\theta}} =
\inf \max_{j=0, 1} \sup \Big\|\sum_{k\in \mathbb{Z}} \varepsilon_k 2^{(1-\theta)k} x_k\Big\|_{X_j},
\]
where the supremum takes over all sequences $(\varepsilon_k) = (\pm 1)$ and the infimum takes over
all representations as above $x=\sum_{k\in \mathbb{Z}} x_k$.

Couples $(c_0, c_0(2^{-n}))$ and $(\ell_1, \ell_1(2^{-n}))$  of $c_0$-spaces and $\ell_1$-spaces modelled
on $\mathbb{Z}$ are denoted by $\vec{c}_0$ and $\vec{\ell}_1$. If $\varphi \in \Phi$, then
$\ell_1(\varphi^{*}(1, 2^{-n})$ is an intermediate space between $\ell_1$ and $\ell_1(2^{-n})$. We denote
by $H_{\varphi}$ the Ovchinnikov functor
\[
Corb_{\ell_1(\varphi^{*}(1, 2^{-n}))}^{\vec{\ell}_1}(\cdot)
\]
If $\varphi(s, t)= s^{1-\theta} t^{\theta}$, for all $s, t>0$ and some $\theta \in (0, 1)$, we write $H_{\theta}$
instead of $H_{\varphi}$.

\begin{theorem}
\label{peetre}
Let $(X_0, X_1)$, $(Y_0, Y_1)$ and $(Z_0, Z_1)$ be Banach couples.~Then, for any bilinear operator
$T\colon (X_0, X_1) \times (Y_0, Y_1) \to (Z_0, Z_1)$, such that $T\colon X_0 \times Y_0 \to Z_0$
is compact, we have
\[
T\colon \langle X_0, X_1\rangle_{\theta} \times \langle Y_0, Y_1\rangle_{\theta} \to H_{\theta}(Z_0, Z_1)
\]
is a compact bilinear operator for every $\theta \in (0, 1)$.
\end{theorem}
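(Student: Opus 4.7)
The approach is to realize Peetre's method as an Aronszajn--Gagliardo orbit functor with respect to a couple that has the approximation property, so that Theorem~\ref{orbcorb} applies directly. The target $H_\theta(Z_0, Z_1)$ is, by the very definition given just above the theorem, already the coorbit functor $Corb^{\vec{\ell}_1}_{\ell_1(2^{-\theta n})}(Z_0,Z_1)$ (use $\varphi(s,t)=s^{1-\theta}t^\theta$ and the elementary identity $\varphi^{\ast}=\varphi$), so no work is needed on that side.

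The first step is to invoke the standard representation (due originally to Peetre, cf.\ Brudnyi--Krugljak and Ovchinnikov) identifying
\[
\langle X_0, X_1\rangle_{\theta} \,=\, Orb^{\vec{\ell}_1}_{A_{\theta}}(X_0, X_1)
\]
isometrically for a concrete intermediate sequence space $A_{\theta}\hookrightarrow \vec{\ell}_1$ with weight $2^{-\theta n}$. Once this identification is in place, the theorem becomes a direct consequence of Theorem~\ref{orbcorb}, applied with the triple $(A_{\theta}, A_{\theta};\ell_1(2^{-\theta n}))$ over the reference couples $(\vec{\ell}_1, \vec{\ell}_1; \vec{\ell}_1)$, provided I verify the hypotheses of Theorem~\ref{core} for this concrete data.

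The verification splits into three routine checks. \emph{(i) Approximation property $(Ap)$ for $\vec{\ell}_1$}: take $P_n$ as the coordinate projection onto $\{|k|\le n\}$ and $P_n^{+}$, $P_n^{-}$ onto $\{k>n\}$, $\{k<-n\}$ respectively; then $\|P_n^{+}\|_{\ell_1\to\ell_1(2^{-n})} = 2^{-n-1}\to 0$ and $\|P_n^{-}\|_{\ell_1(2^{-n})\to\ell_1}=2^{-n-1}\to 0$, while uniform boundedness in $\vec{\ell}_1$ is obvious. \emph{(ii) Decay of fundamental functions}: $\psi_{A_{\theta}}(s,1)$ and $\phi_{\ell_1(2^{-\theta n})}(s,1)$ tend to $0$ as $s\to 0$ by a direct computation with the power weight $2^{-\theta n}$, placing the relevant functions in $\Phi_0$. \emph{(iii) Bilinear estimate} $(A_{\theta},A_{\theta};\ell_1(2^{-\theta n}))\in \mathcal{B}_\varphi(\vec{\ell}_1,\vec{\ell}_1;\vec{\ell}_1)$ with $\varphi(s,t)=s^{1-\theta}t^\theta$: this is the bilinear Calderón-type estimate on weighted $\ell_1$-couples, which follows by evaluating on the canonical basis and combining the endpoint estimates with the geometric mean of the coordinate weights $1$ and $2^{-n}$. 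Since $\vec{\ell}_1$ is regular at each endpoint, the same estimate automatically holds on $(\vec{\ell}_1^{\,\circ}, \vec{\ell}_1^{\,\circ}; \vec{\ell}_1)$.

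The main obstacle is pinning down the orbit representation of $\langle\cdot\rangle_\theta$ and checking the concrete bilinear $\mathcal{B}_\varphi$-estimate for the resulting triple; the approximation property and the fundamental-function decays are then straightforward. With these ingredients collected, Theorem~\ref{orbcorb} yields compactness of
\[
T\colon Orb^{\vec{\ell}_1}_{A_{\theta}}(X_0,X_1)\times Orb^{\vec{\ell}_1}_{A_{\theta}}(Y_0,Y_1) \,\to\, Corb^{\vec{\ell}_1}_{\ell_1(2^{-\theta n})}(Z_0,Z_1),
\]
which after the identifications is exactly the claimed compactness of $T\colon \langle X_0, X_1\rangle_{\theta}\times\langle Y_0, Y_1\rangle_{\theta}\to H_{\theta}(Z_0, Z_1)$.
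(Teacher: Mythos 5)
Your overall strategy --- rewrite both sides as Aronszajn--Gagliardo orbit/coorbit spaces over concrete sequence couples with property $(Ap)$ and then invoke Theorem \ref{orbcorb} --- is exactly the paper's strategy. However, there is a genuine error at the crucial step: the orbital description of Peetre's method is taken over the couple $\vec{c}_0=(c_0,c_0(2^{-n}))$, not over $\vec{\ell}_1=(\ell_1,\ell_1(2^{-n}))$. The correct identity (Janson, Ovchinnikov, and the one the paper uses) is $\langle X_0,X_1\rangle_{\theta}=Orb^{\vec{c}_0}_{c_0(2^{-n\theta})}(X_0,X_1)$. Your claimed identification $\langle X_0,X_1\rangle_{\theta}=Orb^{\vec{\ell}_1}_{A_{\theta}}(X_0,X_1)$ cannot hold for any sequence lattice $A_{\theta}$: orbits of lattices over $\vec{\ell}_1$ are precisely the $J$-method spaces ($Orb^{\vec{\ell}_1}_{E}=J_E$, as the paper itself records in the proof of Theorem \ref{realmethod}). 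Moreover, applying such an identity to the couple $\vec{\ell}_1$ itself forces $A_{\theta}\hookrightarrow\langle \ell_1,\ell_1(2^{-n})\rangle_{\theta}=\ell_1(2^{-n\theta})$, whence by monotonicity of orbits $Orb^{\vec{\ell}_1}_{A_{\theta}}\hookrightarrow Orb^{\vec{\ell}_1}_{\ell_1(2^{-n\theta})}=(\,\cdot\,)_{\theta,1}$, i.e.\ $\langle\,\cdot\,\rangle_{\theta}=(\,\cdot\,)_{\theta,1}$, which is false (already on the couple $(c_0,c_0(2^{-n}))$, where $\langle\,\cdot\,\rangle_{\theta}$ gives $c_0(2^{-n\theta})$ while $(\,\cdot\,)_{\theta,1}$ gives a strictly smaller Lorentz-type space). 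Since the entire argument hinges on this representation, the gap is fatal as written.

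The repair is to take $\ao=\bo:=\vec{c}_0$ on the source side while keeping $\co:=\vec{\ell}_1$ on the target side, with $A=B=c_0(2^{-n\theta})$ and $C=\ell_1(2^{-n\theta})$; your identification of $H_{\theta}$ as $Corb^{\vec{\ell}_1}_{\ell_1(2^{-n\theta})}$ via $\varphi^{*}=\varphi$ is correct. One must then check the hypotheses of Theorem \ref{core} for this mixed data: $(Ap)$ for $\vec{c}_0$ and $\vec{\ell}_1$ (your truncation operators work for both), the $\Phi_0$ conditions for $\psi_{c_0(2^{-n\theta})}$ relative to $\vec{c}_0$ and $\phi_{\ell_1(2^{-n\theta})}$ relative to $\vec{\ell}_1$, and the bilinear estimate $\big(c_0(2^{-n\theta}),c_0(2^{-n\theta});\ell_1(2^{-n\theta})\big)\in\mathcal{B}_{\varphi}(\vec{c}_0,\vec{c}_0;\vec{\ell}_1)$ with $\varphi(s,t)=s^{1-\theta}t^{\theta}$ --- note your basis-vector computation in step (iii) addresses a different (all-$\ell_1$) triple and would have to be redone for $c_0$-based sources. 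The paper sidesteps all of these individual verifications by observing that $c_0(2^{-n\theta})=[c_0,c_0(2^{-n})]_{\theta}$ and $\ell_1(2^{-n\theta})=[\ell_1,\ell_1(2^{-n})]_{\theta}$ and invoking its Theorem \ref{complex}, where Calder\'on's bilinear interpolation theorem supplies the $\mathcal{B}_{\varphi}$ estimate and the embedding $[\,\cdot\,]_{\theta}\hookrightarrow(\,\cdot\,)_{\theta,\infty}$ supplies the $\Phi_0$ conditions in one stroke.
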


\begin{proof}
It is obvious that Banach couples $\vec{c}_0$ and $\vec{\ell}_1$ satisfy approximation property $(Ap)$.
The following well known isometrical formulas
\[
[c_0, c_0(2^{-n})]_{\theta} = c_0(2^{-n\theta}), \quad\, [\ell_1, \ell_1(2^{-n})]_{\theta} = \ell_1(2^{-n\theta})
\]
combined with orbital description of Peetre's functor (see \cite{Janson} or \cite[p.~468]{Ov})
\[
Orb_{c_0(2^{-n\theta})}^{\vec{c}_0}(X_0, X_1) = \langle X_0, X_1\rangle_{\theta}
\]
completes the proof by Theorem \ref{complex} applied for couples $\ao=\bo := \vec{c}_0$ and $\co:= \vec{\ell}_1$.
\end{proof}

We will show applications of the above result to Calder\'on products of Banach function lattices. When the complex method is applied
to a~couple $(X_0, X_1)$ of Banach function lattices, we surmise that $X_j:=X_j(\mathcal{C})$ is a~complexification of $X_j$ for
each $j=0, 1$ on a~$\sigma$-finite complete measure space $(\Omega, \Sigma, \mu)$ with $\text{supp}(X_j)= \Omega$. We recall that
the \emph{Calder\'on product space} $X_{0}^{1-\theta}X_{1}^{\theta}$ is defined for any couple $(X_0, X_1)$ of Banach function
lattices on measure space $(\Omega, \Sigma, \mu)$. It consists of all $f\in L^{0}(\mu)$, such that
$|f| \leq \lambda \,|f_0|^{1-\theta} |f_1|^{\theta}$ $\mu$-a.e.\ for some $\lambda > 0$ and $f_{j}\in X_{j}$ with
$\|f_{j}\|_{X_j} \leq 1$, $j=0, 1$. It is well known (see \cite{Cal}) that $X_{0}^{1-\theta}X_{1}^{\theta}$ is a~Banach function
lattice equipped with the norm
\[
\|f\| = \inf \big\{\lambda >0; \,|f| \leq \lambda \,|f_0|^{1-\theta} |f_1|^{\theta} ,\, \|f_0\|_{X_0} \, \|f_1\|_{X_1}\big\}.
\]
As usual for a given Banach function lattice over $(\Omega, \Sigma, \mu)$, by $X'$, we denote the K\"othe dual space
of $X$ of all $f\in L^0(\mu)$ equipped with the norm
\[
\|f\|_{X'} = \sup_{\|g\|_{X} \leq 1} \int_{\Omega} |fg|\,d\mu.
\]
A Banach function lattice $X$ has the \emph{Fatou property},  provided that the unit ball is closed n $L^0(\mu)$ equipped with
the topology of convergence in measure on $\mu$-finite sets. It is well known that the Fatou property is equivalent to
$X'' = X$, isometrically.

\vspace{2 mm}

Let us draw a useful conclusion in the setting of Calder\'on product spaces.

\begin{corollary}
Let $(X_0, X_1)$, $(Y_0, Y_1)$ and $(Z_0, Z_1)$ be Banach function lattices on the corresponding measure spaces. Assume that
$T\colon (X_0, X_1) \times (Y_0, Y_1) \to (Z_0, Z_1)$ is a~bilinear operator, such that $\colon X_0 \times Y_0 \to Z_0$
is compact. Then,
\[
T\colon (X_0^{1-\theta} X_1^{\theta})^{\circ} \times (Y_0^{1-\theta} Y_1^{\theta})^{\circ} \to
(Z_0^{1-\theta} Z_1^{\theta})''
\]
is a~compact bilinear operator. In particular,
\[
T\colon (X_0^{1-\theta} X_1^{\theta})^{\circ} \times (Y_0^{1-\theta} Y_1^{\theta})^{\circ} \to
Z_0^{1-\theta} Z_1^{\theta}
\]
is compact whenever $Z_0$ and $Z_1$ have the Fatou property.
\end{corollary}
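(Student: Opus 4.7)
The plan is to reduce the statement to Theorem \ref{peetre} by identifying Peetre's functor and the Ovchinnikov coorbit functor $H_{\theta}$ with the concrete Calder\'on-product constructions when the input couples are Banach function lattices. First, I would apply Theorem \ref{peetre} directly to the given couples $(X_0,X_1)$, $(Y_0,Y_1)$, $(Z_0,Z_1)$ and to the bilinear operator $T$, compact on $X_0\times Y_0\to Z_0$. This yields that
\[
T\colon \langle X_0, X_1\rangle_\theta \times \langle Y_0, Y_1\rangle_\theta \to H_\theta(Z_0, Z_1)
\]
is a compact bilinear operator for every $\theta\in(0,1)$.

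The second step is to translate the domain. A classical theorem of Shestakov/Calder\'on for couples of Banach function lattices (together with the orbital description of Peetre's method recalled in the proof of Theorem \ref{peetre}) gives the identification
\[
\langle X_0, X_1\rangle_\theta = (X_0^{1-\theta}X_1^\theta)^{\circ},
\]
and likewise $\langle Y_0, Y_1\rangle_\theta = (Y_0^{1-\theta}Y_1^\theta)^{\circ}$. Thus the domain of the compact operator above is exactly $(X_0^{1-\theta}X_1^\theta)^{\circ}\times (Y_0^{1-\theta}Y_1^\theta)^{\circ}$, as required.

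The third step is to translate the target. Invoking Ovchinnikov's description of the coorbit functor $H_{\theta}$ evaluated on a couple of Banach function lattices, one has
\[
H_\theta(Z_0, Z_1) = (Z_0^{1-\theta}Z_1^\theta)''
\]
isometrically (the double K\"othe dual appears because $H_{\theta}(\cdot,\cdot)$ always has the Fatou property by its coorbit definition). Combining this with the previous step proves the first assertion, namely compactness of
\[
T\colon (X_0^{1-\theta}X_1^\theta)^{\circ}\times (Y_0^{1-\theta}Y_1^\theta)^{\circ} \to (Z_0^{1-\theta}Z_1^\theta)''.
\]
For the second assertion, recall that Calder\'on products inherit the Fatou property from their factors: if $Z_0$ and $Z_1$ have the Fatou property, then so does $Z_0^{1-\theta}Z_1^\theta$, and hence $(Z_0^{1-\theta}Z_1^\theta)''=Z_0^{1-\theta}Z_1^\theta$ isometrically. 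Substituting this equality into the previous display yields the final conclusion.

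The main obstacle, and the only non-routine input besides Theorem \ref{peetre}, is the two identifications in Steps 2 and 3. Both are well-known facts from the interpolation theory of Banach function lattices (Calder\'on, Shestakov, Ovchinnikov) and should be cited rather than reproved here; all the work specific to the compactness statement is already encapsulated in Theorem \ref{orbcorb} and in its specialization to Peetre's method in Theorem \ref{peetre}.
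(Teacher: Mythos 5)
Your proposal is correct and follows essentially the same route as the paper: apply Theorem \ref{peetre}, then use Nilsson's identification $\langle E_0,E_1\rangle_\theta=(E_0^{1-\theta}E_1^\theta)^{\circ}$ for the domain couples and Ovchinnikov's relation between $H_\theta$ and the double K\"othe dual of the Calder\'on product for the target, finishing with the Fatou property. The only minor overstatement is your claim that $H_\theta(Z_0,Z_1)=(Z_0^{1-\theta}Z_1^\theta)''$ isometrically; the paper (citing Ovchinnikov) uses only the continuous embedding $H_\theta(Z_0,Z_1)\hookrightarrow (Z_0^{1-\theta}Z_1^\theta)''=(Z_0'')^{1-\theta}(Z_1'')^{\theta}$, which is all that is needed to transfer compactness.
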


\begin{proof}
For any couple $(E_0, E_1)$ of Banach lattices and $\theta \in (0, 1)$, we have (see \cite[Theorem 2.1]{Nilsson})
\[
\langle E_0, E_1 \rangle_{\theta} = (E_0^{1-\theta} E_1^{\theta})^{\circ}
\]
and (see \cite[Lemma 8.5.1]{Ov})
\[
H_{\theta}(E_0, E_1) \hookrightarrow (E_0^{1-\theta} E_1^{\theta})'' = (E_{0}'')^{1-\theta}(E_{1}'')^{\theta}.
\]
By applying Theorem \ref{peetre}, the required statement is given.
\end{proof}

We conclude with applications to the real methods of interpolation. Let $E$ be a~Banach sequence lattice intermediate with
respect to $(\ell_\infty, \ell_\infty(2^{-n}))$. For a~given Banach couple $\xo$, we denote by $K_E(\xo)$ the $K$-\emph{space}
which is the Banach space of all $x\in X_0 + X_1$ such that $\{K(2^k, x; \xo)\}_{k\in \mathbb{Z}} \in E$ equipped with the
norm
\[
\|x\|_{K_E(\xo)} = \|\{K(2^k, x; \xo)\}\|_{E}.
\]
It is well known that $K_E$ is an exact interpolation functor which is often called $K$-\emph{method} of interpolation.

We also recall the so called $J$-\emph{method} of interpolation. As usual for any Banach couple $\xo = (X_0, X_1)$,
we let $J(t, x; \xo) := \max\{\|x\|_{X_0}, t \|x\|_{X_1}\}$ for any $x\in X_0 \cap X_1$ and all $t>0$. Let $F$ be
a~Banach sequence lattice intermediate with respect to $(\ell_1, \ell_1(2^{-n}))$. By $J_F(\xo)$ we denote the
$J$-\emph{space} which is the Banach space of all $x\in X_0 + X_1$ represented in the form
\[
x = \sum_{k=-\infty}^{\infty} u_k  \quad\, (\text{convergence in $X_0 + X_1$}),
\]
where $\{J(2^k, u_{k}; \xo)\} \in F$ with the norm
\[
\|x\|_{J_F(\xo)} = \inf\Big\{\big\|\big\{J(2^{k}, u_{k}; \xo)\big\}\big\|_{F};\, x = \sum_{k=-\infty}^{\infty} u_k\Big\}.
\]
It is well known that $J_F$ is an exact interpolation functor.

Observe that $\{J(2^k, u_{k}; \xo)\} \in F$ combined with $F \hookrightarrow \ell_1 + \ell_1(2^{-n})$ yields that the
series $\sum_{k=-\infty}^{\infty} u_k$ converges absolutely into $X_0 + X_1$:
\[
\sum_{k=-\infty}^{\infty} \|u_k\|_{X_0 + X_1} \leq \sum_{k=-\infty}^{\infty} J(2^k, u_k; \xo) \min\Big\{1, \frac{1}{2^{k}}\Big\} =
\big\|\big\{J(2^k, u_k; \xo)\big\}\big\|_{\ell_1 + \ell_1(2^{-n})}.
\]

We note that if a~Banach sequence lattice $E$ on $\mathbb{Z}$ satisfies the condition $\ell_\infty \cap \ell_\infty(2^{-n})
\hookrightarrow  E \hookrightarrow \ell_1 + \ell_1(2^{-n})$, then $K_E(\xo)$ and $J_E(\xo)$ are well defined for any Banach couple
$\xo$. This follows immediately from the classical fundamental lemma (see \cite{BL})
\[
K_E(\xo) \hookrightarrow J_E(\xo).
\]
Space $E$ is said to be a parameter of the real method if $K_E(\xo) = J_E(\xo)$ for any Banach couple $\xo$. It is
well known that this is equivalent to the fact that, for any operator $T\colon \vec{\ell}_1 \to \vec{\ell}_\infty$,
one has $T\colon E \to E$ (see, e.g., \cite[Lemma 7.3.1]{Ov}).

We are now able to state our general bilinear interpolation theorem on compactness for bilinear operators
on real methods spaces.

\begin{theorem}
\label{realmethod}
Let $E_0$, $E_1$ and $F$ be Banach sequence lattices,  such that $(E_0, E_1; F) \in \mathcal{B}_{\varphi}(\vec{\ell}_1,
\vec{\ell}_{1}; \vec{\ell}_{\infty})$ for some $\varphi \in \Phi_0$, and let $\psi_{E_0}(s, 1)\to 0$, $\psi_{E_1}(s, 1)\to 0$
and $\phi_F(s, 1) \to 0$ as $s\to 0$. Then, for any Banach couples $(X_0, X_1)$, $(Y_0, Y_1)$ and $(Z_0, Z_1)$
and any bilinear operator $T\colon (X_0, X_1) \times (Y_0, Y_1) \to (Z_0, Z_1)$ such that $T\colon X_0 \times Y_0 \to Z_0$ is
compact, we obtain
\[
T\colon J_{E_0}(X_0, X_1) \times J_{E_1}(Y_0, Y_1) \to K_F(Z_0, Z_1)
\]
is a compact bilinear operator.
\end{theorem}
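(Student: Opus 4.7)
The plan is to deduce this theorem by applying the general orbit/co-orbit compactness result, Theorem~\ref{orbcorb}, to the specific choices
\[
\ao = \bo := \vec{\ell}_1 = (\ell_1, \ell_1(2^{-n})), \qquad \co := \vec{\ell}_\infty := (\ell_\infty, \ell_\infty(2^{-n})),
\]
with intermediate spaces $A := E_0$, $B := E_1$ and $C := F$. The bridge between the two formulations is the well-known orbital and co-orbital description of the $J$- and $K$-spaces of the real method (see, e.g., \cite{Ov}), namely
\[
J_E(\xo) = Orb_E^{\vec{\ell}_1}(\xo), \qquad K_F(\xo) = Corb_F^{\vec{\ell}_\infty}(\xo),
\]
with equivalence of norms. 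Modulo these identifications, the desired statement is literally the conclusion of Theorem~\ref{orbcorb} applied to the above data.

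To apply Theorem~\ref{orbcorb}, I would verify each of its hypotheses in turn. First, the approximation property $(Ap)$ holds for both $\vec{\ell}_1$ and $\vec{\ell}_\infty$ via coordinate truncations: for each $n \in \mathbb{N}$, let $P_n$, $P_n^+$, $P_n^-$ be the projections onto indices $|k| \leq n$, $k > n$, and $k < -n$, respectively. They satisfy $I = P_n + P_n^+ + P_n^-$, are contractions on the endpoints of either couple, and a direct computation yields $\|P_n^+\|_{\ell_1 \to \ell_1(2^{-n})} \leq 2^{-n}$ and $\|P_n^-\|_{\ell_1(2^{-n}) \to \ell_1} \leq 2^{-n}$, with analogous $2^{-n}$-bounds in the $\vec{\ell}_\infty$ case; all these tend to zero. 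Second, the couple $\vec{\ell}_1$ is regular because finitely supported sequences lie in $\ell_1 \cap \ell_1(2^{-n})$ and are dense in each endpoint, so $\vec{\ell}_1^\circ = \vec{\ell}_1$, and the required second bilinear interpolation condition $(E_0, E_1; F) \in \mathcal{B}_\varphi(\vec{\ell}_1^\circ, \vec{\ell}_1^\circ; \vec{\ell}_\infty)$ coincides with the standing hypothesis $(E_0, E_1; F) \in \mathcal{B}_\varphi(\vec{\ell}_1, \vec{\ell}_1; \vec{\ell}_\infty)$. Finally, the fundamental function conditions $\psi_{E_j}(s, 1) \to 0$, $\phi_F(s, 1) \to 0$ as $s \to 0$, and $\varphi \in \Phi_0$ are all part of the hypotheses of the present theorem.

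With all hypotheses of Theorem~\ref{orbcorb} verified, that theorem yields that
\[
T\colon Orb_{E_0}^{\vec{\ell}_1}(X_0, X_1) \times Orb_{E_1}^{\vec{\ell}_1}(Y_0, Y_1) \to Corb_F^{\vec{\ell}_\infty}(Z_0, Z_1)
\]
is compact, and the orbital/co-orbital identifications then translate this into the required compactness of $T\colon J_{E_0}(X_0, X_1) \times J_{E_1}(Y_0, Y_1) \to K_F(Z_0, Z_1)$. The main technical point in the argument is the identification of $J_E$ and $K_F$ with the appropriate orbit and co-orbit functors for a general lattice parameter $E$; this is classical in the theory of the real method but merits an explicit citation. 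Once granted, the remaining verifications are routine bookkeeping with the definitions.
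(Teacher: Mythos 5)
Your proposal is correct and follows essentially the same route as the paper: identify $J_{E}$ and $K_F$ with $Orb_{E}^{\vec{\ell}_1}$ and $Corb_{F}^{\vec{\ell}_\infty}$ respectively, use regularity of $\vec{\ell}_1$ to collapse the two bilinear interpolation hypotheses of Theorem~\ref{core} into one, and invoke Theorem~\ref{orbcorb}. You are in fact somewhat more explicit than the paper in verifying the approximation property $(Ap)$ for $\vec{\ell}_1$ and $\vec{\ell}_\infty$ via coordinate truncations, which is a welcome addition rather than a deviation.
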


\begin{proof}
From the well known isometrical description of coorbital (resp., orbital) of the $K$-space (resp., $J$-space), we have,
for any Banach couple $(A_0, A_1)$ (see \cite[Theorems 3.3.4, 3.4.12]{BK} or \cite[Theorems 7.1.1, 7.2.1]{Ov}):
\begin{align*}
K_{F}(A_0, A_1) = Corb_{F}^{\vec{\ell}_\infty}(A_0, A_1) \quad\, (\text{resp., $J_E(A_0, A_1) = Orb_{E}^{\vec{\ell}_1}(A_0, A_1)$}).
\end{align*}
Since $\vec{\ell}_1$ is a regular couple,
\[
J_E(A_0, A_1) = J_E(A_{0}^{\circ}, A_{1}^{\circ}).
\]
Now we are in a position to apply Theorem \ref{orbcorb} to get the statement.
\end{proof}

We provide a result which gives a complete description of triples of Banach sequence lattices
$(E_0, E_1; F) \in \mathcal{B}_{\varphi}(\vec{\ell}_1, \vec{\ell}_{1}; \vec{\ell}_{\infty})$ in terms
of boundedness of the convolution operator $\sigma$ defined on $(\ell_1 + \ell_1(2^{-n}))\times
(\ell_1 + \ell_1(2^{-n}))$ by $\sigma (x,y) = x\star y$, for all $x = \{x_n\}$ and $y = \{y_n\}$ in
$\ell_1 + \ell_1(2^{-n})$, where
\[
x\star y := \bigg\{\sum_{m=-\infty}^{\infty} x_m y_{k-m}\bigg\}_{k=-\infty}^{\infty}.
\]
If Banach sequence lattices  $E_0$, $E_1$ and $E_2$ intermediate with respect to $\vec{\ell}_1$ are such
that the convolution operator $\sigma \colon E_0 \times E_1 \to E_2$, then we write $E_0 \star E_1 \subset
E_2$ for short.

\vspace{2 mm}

At first we prove the following lemma.

\begin{theorem}
\label{conv}
Let Banach sequence lattices $E_0$, $E_1$ and $E_2$ be intermediate with respect to $\vec{\ell}_1$ such that
$E_0 \star E_1 \subset E_2$. Then for $F=J_{E_2}(\vec{\ell}_{\infty})$, we have $(E_0, E_1; F) \in \mathcal{B}(\vec{\ell}_1, \vec{\ell}_1; \vec{\ell}_{\infty})$. In particular $(E_0, E_1; F)\in \mathcal{B}_{\varphi}(\vec{\ell}_1, \vec{\ell}_1; \vec{\ell}_{\infty})$
with $\varphi$ defined by
\[
\varphi(s, t):= \sup \|T\|_{E_0 \times E_1\to F}, \quad\, s, t>0,
\]
where the supremum takes over all bilinear operators $T\colon \vec{\ell}_1 \times \vec{\ell}_1\to \vec{\ell}_{\infty}$,
such that $\|T\|_{\ell_1 \times \ell_1 \to \ell_{\infty}} \linebreak \leq s$ and
$\|T\|_{\ell_1(2^{-n}) \times \ell_1(2^{-n}) \to \ell_{\infty}(2^{-n})} \leq t$.
\end{theorem}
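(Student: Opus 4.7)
The plan is to fix a bilinear operator $T \colon \vec{\ell}_1 \times \vec{\ell}_1 \to \vec{\ell}_\infty$ with norms $s := \|T\|_{\ell_1 \times \ell_1 \to \ell_\infty}$ and $t := \|T\|_{\ell_1(2^{-n}) \times \ell_1(2^{-n}) \to \ell_\infty(2^{-n})}$, and to write down for each $(x, y) \in E_0 \times E_1$ an explicit $J$-representation of $T(x, y)$ in $\vec{\ell}_\infty$ whose block norms are controlled by the convolution $|x| \star |y|$. The hypothesis $E_0 \star E_1 \subset E_2$ will then force $T(x, y) \in F = J_{E_2}(\vec{\ell}_\infty)$ with the desired bilinear estimate, which is the first assertion; the ``in particular'' clause is an automatic consequence once one takes the supremum over admissible $T$.

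Evaluating $T$ on the canonical unit vectors $e_m \in \ell_1 \cap \ell_1(2^{-n})$ produces $T(e_m, e_n) \in \ell_\infty \cap \ell_\infty(2^{-n})$ satisfying
\[
\|T(e_m, e_n)\|_{\ell_\infty} \leq s, \qquad \|T(e_m, e_n)\|_{\ell_\infty(2^{-n})} \leq t\, 2^{-(m+n)}.
\]
Grouping the bilinear expansion of $T(x, y)$ along the diagonals $k = m + n$, set
\[
u_k := \sum_{m+n = k} x_m y_n T(e_m, e_n).
\]
A direct triangle inequality then yields $\|u_k\|_{\ell_\infty} \leq s\,(|x| \star |y|)_k$ and $\|u_k\|_{\ell_\infty(2^{-n})} \leq t\, 2^{-k} (|x| \star |y|)_k$; the factor $2^k$ appearing in the $J$-functional cancels the weight $2^{-(m+n)} = 2^{-k}$, so
\[
J(2^k, u_k; \vec{\ell}_\infty) \leq \max\{s, t\}\,(|x| \star |y|)_k.
\]

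The assumption $E_0 \star E_1 \subset E_2$ combined with the lattice structure of $E_2$ then places $\{J(2^k, u_k; \vec{\ell}_\infty)\}_k \in E_2$ with norm at most $\max\{s, t\}\,\|\sigma\|_{E_0 \times E_1 \to E_2}\,\|x\|_{E_0}\|y\|_{E_1}$. The $J$-method estimate $\sum_k \|u_k\|_{\ell_\infty + \ell_\infty(2^{-n})} \leq \|\{J(2^k, u_k; \vec{\ell}_\infty)\}\|_{\ell_1 + \ell_1(2^{-n})}$ recalled just before the theorem then ensures absolute convergence of $\sum_k u_k$ in $\ell_\infty + \ell_\infty(2^{-n})$. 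Identifying this sum with $T(x, y)$ places $T(x, y) \in F$ with the bound $\|T(x, y)\|_F \leq \max\{s, t\}\,\|\sigma\|_{E_0 \times E_1 \to E_2}\,\|x\|_{E_0}\|y\|_{E_1}$, and hence $(E_0, E_1; F) \in \mathcal{B}(\vec{\ell}_1, \vec{\ell}_1; \vec{\ell}_\infty)$. The function $\varphi$ defined by the stated supremum is therefore finite and dominated by $\max\{s, t\}\,\|\sigma\|_{E_0 \times E_1 \to E_2}$, manifestly non-decreasing and positively homogeneous, so $\varphi \in \Phi$ and the required bilinear estimate holds by the very definition of $\varphi$.

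The main technical point I expect to require care is the identification $\sum_k u_k = T(x, y)$ for arbitrary $x \in E_0$ and $y \in E_1$, since finitely supported sequences need not be dense in $E_0$ or $E_1$. The plan is a coordinatewise argument: at each fixed index $j$, the pointwise bound $|(T(e_m, e_n))_j| \leq \min\{s,\, t\cdot 2^{j - m - n}\}$ together with $|x| \star |y| \in E_2 \hookrightarrow \ell_1 + \ell_1(2^{-n})$ gives absolute convergence of the double sum $\sum_{m, n} x_m y_n (T(e_m, e_n))_j$, so Fubini identifies it with $\sum_k (u_k)_j$, while truncating $x, y$ to finite support (which does converge in $\ell_1 + \ell_1(2^{-n})$) and using the continuity of $T$ from $(\ell_1 + \ell_1(2^{-n}))^2$ into $\ell_\infty + \ell_\infty(2^{-n})$ matches this with the $j$-th coordinate of $T(x, y)$.
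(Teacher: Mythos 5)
Your proposal is correct and follows essentially the same route as the paper: expand $x$ and $y$ in unit vectors, group the double series for $T(x,y)$ along the diagonals $k=m+n$ to obtain blocks $u_k$ with $J(2^k,u_k;\vec{\ell}_\infty)\le \max\{s,t\}\,(|x|\star|y|)_k$, and invoke $E_0\star E_1\subset E_2$ to conclude $T(x,y)\in J_{E_2}(\vec{\ell}_\infty)$. Your extra care in identifying $\sum_k u_k$ with $T(x,y)$ coordinatewise is a legitimate (slightly more explicit) substitute for the paper's appeal to absolute convergence of the double series in $\ell_\infty+\ell_\infty(2^{-n})$ and continuity of $T$.
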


\begin{proof}
Let us assume that $E_0 \star E_1 \subset E_2$ and let $T\colon \vec{\ell}_1 \times \vec{\ell}_1 \to \vec{\ell}_{\infty}$ be any bilinear
operator with norm less than or equal to $1$.

Fix $x = \{x_m\} \in E_0$ and $y\in \{x_k\} \in E_1$. By $E_j \hookrightarrow \ell_1 + \ell_1(2^{-n})$ for $j\in \{0, 1\}$,
then the two series
\[
x = \sum_{m=-\infty}^{\infty} x_m e_m, \quad\, y = \sum_{k=-\infty}^{\infty} y_k e_k \quad\,
\]
converge absolutely in $\ell_1 + \ell_1(2^{-n}))$, where $e_n$ denotes the standard unit basis vector for each $n\in \mathbb{Z}$.

Since $T\colon (\ell_1 + \ell_1(2^{-n})) \times (\ell_1 + \ell_1(2^{-n})) \to \ell_{\infty} + \ell_{\infty}(2^{-n})$ is continuous,
\begin{align*}
T(x, y) = \sum_{m, k=-\infty}^{\infty} T(x_m e_m, y_k e_k) = \sum_{m, k=-\infty}^{\infty} T(x_m e_m, y_{k-m} e_{k-m})
\end{align*}
where each double series converges absolutely into $\ell_{\infty} + \ell_{\infty}(2^{-n})$. Consequently,
\[
T(x, y) = \sum_{k=-\infty}^{\infty}\bigg(\sum_{m=-\infty}^{\infty} T(x_m e_m, y_{k-m} e_{k-m})\bigg)
\]
with convergence in $\ell_{\infty} + \ell_{\infty}(2^{-n})$.

Observe that for each $k\in \mathbb{Z}$, we have (by $\|T\|_{\vec{\ell}_1 \times \vec{\ell}_1 \to \vec{\ell}_{\infty}} \leq 1$)
\begin{align*}
\Big\|\sum_{m=-\infty}^{\infty} T(x_m e_m, y_{k-m} e_{k-m})\Big\|_{\ell_{\infty}}  & \leq \sum_{m=-\infty}^{\infty} \|T(x_m e_m, y_{k-m} e_{k-m})\|_{\ell_{\infty}} \\
& \leq  \sum_{m=-\infty}^{\infty} \|x_m e_m\|_{\ell_1}\,\|y_{k-m} e_{k-m}\|_{\ell_1} \\
& \leq \sum_{m=-\infty}^{\infty} |x_m| |y_{k-m}| = (|x|\star |y|)_{k}
\end{align*}
and similarly for each $k\in \mathbb{Z}$,
\begin{align*}
\Big\|\sum_{m=-\infty}^{\infty} T(x_m e_m, y_{k-m} e_{k-m})\Big\|_{\ell_{\infty}(2^{-n})} & \leq
\sum_{m=-\infty}^{\infty} \|T(x_m e_m, y_{k-m} e_{k-m})\|_{\ell_{\infty}(2^{-n})} \\
& \leq \sum_{m=-\infty}^{\infty} \|x_m e_m\|_{\ell_1(2^{-n})}\,\|y_{k-m} e_{k-m}\|_{\ell_1(2^{-n})} \\
& \leq 2^{-k}\,\sum_{m=-\infty}^{\infty} |x_m| |y_{k-m}| = (|x|\star |y|)_{k}.
\end{align*}
Combining the above estimates, we conclude that
\[
u_k = \sum_{m=-\infty}^{\infty} T(x_m e_m, y_{k-m} e_{k-m}) \in \ell_{\infty} \cap \ell_{\infty}(2^{-n})
\]
and
\[
\big\{J(2^k, u_k; \ell_{\infty})\big\}_{k=-\infty}^{\infty} \leq \big\{(|x|\star |y|)_{k}\big\}_{k=-\infty}^{\infty} = |x| \star |y|.
\]
Since
\[
T(x, y) = \sum_{k=-\infty}^{\infty} u_k \quad\, (\text{convergence in $\ell_{\infty} + \ell_{\infty}(2^{-n})$})
\]
and there exists a positive constant $C$ (since $\sigma$ is positive, $\sigma \colon E_0 \times E_1 \to E_2$
is a bounded bilinear operator)
\[
\||x|\star |y|\|_{E_2}  \leq C \|x\|_{E_0}\,\|y\|_{E_1}, \quad\, (x, y) \in E_0 \times E_{1},
\]
we get that $T(x, y) \in J_{E_2}(\vec{\ell}_{\infty})$ with
\begin{align*}
\|T(x, y)\|_{J_{E_2}(\vec{\ell}_{\infty})} & \leq \|\{J(2^{k}, u_k; \vec{\ell}_{\infty})\}\|_{E_2} \leq C \|x\|_{E_0}\,\|y\|_{E_1}.
\end{align*}
This completes the proof of the first statement. The second statement is obvious.
\end{proof}

Let us conclude by remarking that the convolution operator $\sigma \in \mathcal{B}(\vec{\ell}_1, \vec{\ell}_{1}; \vec{\ell}_{\infty})$,
and so an immediate consequence of Theorem \ref{conv}, is the following result. If $E_0$, $E_1$ and $F$ are Banach sequence lattices
intermediate with respect to $\vec{\ell}_1$ and if $F$ is a~real parameter of the real method, then $(E_0, E_1; F) \in
\mathcal{B}(\vec{\ell}_1, \vec{\ell}_1; \vec{\ell}_{\infty})$, if and only if, $E_0 \star E_1 \subset F$. This observation in combination
with Theorem \ref{realmethod} in particular yields a more general variant of a~bilinear compactness interpolation theorem established
in \cite[Theorem 3.1]{Luz} for spaces generated by parameters of the real method.

\vspace{3 mm}

\bibliographystyle{amsplain}

\vspace{3 mm}

\noindent Mieczys{\l}aw Masty{\l}o \\
Faculty of Mathematics \& Comp. Sci. \\
Adam~Mickiewicz University in Pozna{\'n}\\
Umultowska 87 \\
61-614 Pozna{\'n}, Poland

\vspace{1 mm}

\noindent
E-mail: \,{\tt mastylo$@$amu.edu.pl} \\

\noindent
Eduardo B.~Silva  \\
Departamento de Matem\'atica \\
Universidade Estadual de Maring\'a--UEM \\
Av.~Colombo 5790 \\
Maring\'a - PR  \\
Brazil - 870300-110

\vspace{1 mm}

\noindent E-mail: \,{\tt ebsilva@uem.br}
\end{document}